\titleformat{\section}[block]
 {\bfseries}
 {\thesection.}
 {\fontdimen2\font}
 {}
\setlist{noitemsep}
\newtheorem{theorem}{Theorem}[section]
\newtheorem{corollary}[theorem]{Corollary}
\newtheorem{proposition}[theorem]{Proposition}
\theoremstyle{definition}
\newtheorem{remark}[theorem]{Remark}
\DeclareMathOperator{\Q}{\mathbb{Q}}
\DeclareMathOperator{\N}{\mathbb{N}}
\DeclareMathOperator{\R}{\mathbb{R}}
\DeclareMathOperator{\uhr}{\upharpoonright} 
\DeclareMathOperator\coz{coz}
\DeclareMathOperator{\lip}{Lip}
\renewcommand{\emptyset}{\varnothing}
\numberwithin{equation}{section}
\begin{document}

\author{Valentin Gutev}
  \address{Department of Mathematics, Faculty of Science, University of
     Malta, Msida MSD 2080, Malta}
  \email{valentin.gutev@um.edu.mt}

\subjclass[2010]{26A16, 41A30, 54C20, 54C30, 54D20, 54E35, 54E40}

 \keywords{Lipschitz function, pointwise Lipschitz function, locally
   Lipschitz function, Lipschitz in the small function, extension,
   pointwise and uniform approximation}

\title{Lipschitz Extensions and Approximations}

\begin{abstract}
  The classical Hahn-Banach theorem is based on a successive
  point-by-point procedure of extending bounded linear functionals. In
  the setting of a general metric domain, the conditions are less
  restrictive and the extension is only required to be Lipschitz with
  the same Lipschitz constant. In this case, the successive procedure
  can be replaced by a much simpler one which was done by McShane
  and Whitney in the 1930s. Using virtually the same construction,
  Czipszer and Geh\'er showed a similar extension property for
  pointwise Lipschitz functions. In the present paper, we relate this
  construction to another classical result obtained previously by
  Hausdorff and dealing with pointwise Lipschitz approximations of
  semi-continuous functions. Moreover, we furnish complementary
  extension-approximation results for locally Lipschitz functions
  which fit naturally in this framework.
\end{abstract}

\date{\today}
\maketitle

\section{Introduction}

Let $(X,d)$ and $(Y,\rho)$ be metric spaces. A map $f : X\to Y$ is
called \emph{Lipschitz} if there exists $K\geq 0$ such that
\begin{equation}
  \label{eq:Loc-Lipschitz-v1:1}
  \rho(f(p),f(q))\leq K d(p,q),\quad \text{for all $p,q\in X$.}
\end{equation}
In this case, to emphasise on the constant $K$, we also say that $f$
is \emph{$K$-Lipschitz}.  The least $K\geq 0$ for which $f$ is
$K$-Lipschitz is called the \emph{Lipschitz constant} of $f$, and
denoted by $\lip(f)$.  The \emph{Lipschitz constant} of a map
$f:X\to Y$ at a non-isolated point $p\in X$ is defined by
\begin{equation}
  \label{eq:Lipschitz-ext:29}
  \lip(f,p)=\varlimsup_{x\to p}\frac{\rho(f(x),f(p))}{d(x,p)}=
  \inf_{t>0}\left[\sup_{0<d(x,p)<t}
    \frac{\rho(f(x),f(p))}{d(x,p)}\right]. 
\end{equation}
For an isolated point $p\in X$, we simply set $\lip(f,p)=0$. A map
$f:X\to Y$ is called \emph{pointwise Lipschitz} if $\lip(f,p)<+\infty$,
for every $p\in X$. For $\delta>0$, let $\mathbf{O}(p,\delta)$ be the
\emph{open $\delta$-ball centred at $p$}, i.e.
\begin{equation}
  \label{eq:Loc-Lipschitz-v18:2}
    \mathbf{O}(p,\delta)=\{x\in X:d(x,p)<\delta\}.  
\end{equation}
It follows from \eqref{eq:Lipschitz-ext:29} that $f:X\to Y$ is
pointwise Lipschitz if and only if for each $p\in X$ there
exists $L_p\geq 0$ and $\delta_p>0$ such that
\begin{equation}
  \label{eq:Lipschitz-ext:30}
  \rho(f(x),f(p))\leq L_p\, d(x,p),\quad \text{whenever $x\in
    \mathbf{O}(p,\delta_p)$.} 
\end{equation}
Finally, let us recall that a map $f:X\to Y$ is \emph{locally
  Lipschitz} if each point $p\in X$ is contained in an open set
$U\subset X$ such that $f\uhr U$ is Lipschitz.\medskip

There are simple examples of locally Lipschitz functions which are
neither Lipschitz nor uniformly continuous. On the other hand, each
locally Lipschitz map on a compact metric space is Lipschitz. In fact,
it was shown by Scanlon \cite[Theorem 2.1]{MR253053} that $f:X\to Y$
is locally Lipschitz iff its restriction on each compact subset of $X$
is Lipschitz.  According to \eqref{eq:Lipschitz-ext:30}, each
pointwise Lipschitz map is continuous and each locally Lipschitz map
is pointwise Lipschitz.  The converse is not true, see e.g.\
\cite[Example 2.7]{MR2564873}. Locally Lipschitz and pointwise
Lipschitz functions are naturally related to differentiability.  In
1919, Rademacher \cite{MR1511935} showed that each locally Lipschitz
function ${f:\Omega\to \R}$, defined on an open set
$\Omega\subset \R^n$, is differentiable almost everywhere (in the
sense of the Lebesgue measure). In 1923, Stepanov (spelled also
Stepanoff) \cite{MR1512177,zbMATH02591494} extended Rademacher's
result to pointwise Lipschitz functions. In the literature, Stepanov's
theorem is traditionally proved by applying Rademacher's theorem to
Lipschitz extensions of Lipschitz functions; a straightforward simple
proof based on Lipschitz approximations was given by J. Mal\'y
\cite{MR1687460}.\medskip

Regarding Lipschitz extensions, the following theorem was obtained by
McShane \cite[Theorem 1]{MR1562984}, see also Whitney \cite[the
footnote on p.\ 63]{MR1501735}, and is commonly called the
\emph{McShane-Whitney extension theorem}.

\begin{theorem}
  \label{theorem-Loc-Lipschitz-v1:1}
  Let $(X,d)$ be a metric space, $A\subset X$ and $\varphi:A\to \R$ be
  a $K$-Lipschitz function. Then there exists a $K$-Lipschitz function
  $\Phi:X\to \R$ which is an extension of $\varphi$, i.e.\ with
  $\Phi\uhr A=\varphi$.
\end{theorem}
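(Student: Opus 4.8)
The plan is to use the classical McShane–Whitney construction, which replaces the Hahn–Banach point-by-point extension by a single explicit formula. First I would fix, for each point $a\in A$, the affine function $x\mapsto \varphi(a)+Kd(x,a)$ on $X$; since $\varphi$ is $K$-Lipschitz, each of these dominates $\varphi$ on $A$, and I would define
\begin{equation*}
  \Phi(x)=\inf_{a\in A}\bigl[\varphi(a)+Kd(x,a)\bigr],\qquad x\in X.
\end{equation*}
The first thing to check is that this infimum is finite (not $-\infty$): fixing any $a_0\in A$, the triangle inequality together with $K$-Lipschitzness of $\varphi$ gives $\varphi(a)+Kd(x,a)\ge \varphi(a_0)-Kd(a,a_0)+Kd(x,a)\ge \varphi(a_0)-Kd(x,a_0)$, a bound independent of $a$, so $\Phi(x)>-\infty$ for every $x$. (Taking a supremum of the symmetric family $\varphi(a)-Kd(x,a)$ would work equally well; the infimum is the "largest" such extension.)

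Next I would verify the three required properties. \textbf{Extension:} for $p\in A$, taking $a=p$ in the infimum gives $\Phi(p)\le\varphi(p)$, while for every $a\in A$ the Lipschitz bound gives $\varphi(a)+Kd(p,a)\ge\varphi(p)$, hence $\Phi(p)\ge\varphi(p)$; so $\Phi\uhr A=\varphi$. \textbf{Real-valuedness:} finiteness was just shown, and $\Phi(x)\le\varphi(a_0)+Kd(x,a_0)<+\infty$. \textbf{$K$-Lipschitz:} for $p,q\in X$ and any $a\in A$, the triangle inequality gives $\varphi(a)+Kd(p,a)\le \varphi(a)+Kd(q,a)+Kd(p,q)$; taking the infimum over $a$ on the right yields $\Phi(p)\le\Phi(q)+Kd(p,q)$, and by symmetry $|\Phi(p)-\Phi(q)|\le Kd(p,q)$.

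I do not expect any serious obstacle here: the only subtle point is the finiteness of the infimum, which is exactly where the $K$-Lipschitz hypothesis on $\varphi$ is used (without a Lipschitz bound on $\varphi$, the infimum of the affine functions could be $-\infty$), and this is handled by the one-line triangle-inequality estimate above. It is worth remarking that this construction is monotone and explicit, which is what makes it the natural prototype for the pointwise-Lipschitz and locally-Lipschitz variants discussed later in the paper; the same formula, with $Kd(x,a)$ replaced by an appropriate modulus, will reappear in those settings.
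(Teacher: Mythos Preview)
Your proof is correct and follows essentially the same McShane--Whitney/Pasch--Hausdorff construction as the paper: your $\Phi$ is precisely the paper's $\Phi_+$ from \eqref{eq:Loc-Lipschitz-v23:2}, and your verification of the Lipschitz property mirrors the triangle-inequality argument of Theorem~\ref{theorem-Loc-Lipschitz-v9:1}. The only cosmetic difference is that the paper introduces both $\Phi_-$ and $\Phi_+$ at once and uses the inequality $\Phi_-\le\Phi_+$ to obtain finiteness and the extension property in one stroke, whereas you work with $\Phi_+$ alone and check these directly---but your lower-bound estimate $\varphi(a)+Kd(x,a)\ge\varphi(a_0)-Kd(x,a_0)$ is exactly the pointwise content of $\Phi_+\ge\Phi_-$.
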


The extension in Theorem \ref{theorem-Loc-Lipschitz-v1:1} was defined
by an explicit formula involving the function $\varphi$, the Lipschitz
constant $K$ and the metric $d$, see
\eqref{eq:Loc-Lipschitz-v23:2}. It is interesting to compare this
theorem with two other classical results. Namely, the construction in
Theorem \ref{theorem-Loc-Lipschitz-v1:1} is nearly the same as the one
used for the proof of the classical \emph{Hahn-Banach theorem}
\cite{zbMATH02570122,zbMATH02582128}, see also \cite{zbMATH02570123,
  58.0420.01,MR880204} and a previous work of Helly
\cite{zbMATH02625726}. This relationship was explicitly stated by
several authors, for instance in Czipszer and Geh\'er \cite{MR71493},
where they rediscovered Theorem \ref{theorem-Loc-Lipschitz-v1:1}.
This construction is also virtually the same as the one used by
Hausdorff \cite{hausdorff:19} to deal with pointwise limits of
semi-continuous functions, see \eqref{eq:Loc-Lipschitz-v23:1} and
\eqref{eq:Loc-Lipschitz-v23:3}. In fact, in his proof of a more
general result, see \cite[page 293]{hausdorff:19}, Hausdorff credited
the construction to Moritz Pasch and showed that it always gives rise
to a Lipschitz function. In contrast to the Hahn-Banach theorem, the
relationship with the Pasch-Hausdorff construction remained somehow
unnoticed. \medskip

The construction in Theorem \ref{theorem-Loc-Lipschitz-v1:1} was
further applied to pointwise Lipschitz functions by Czipszer and Geh\'er
\cite[Theorem II]{MR71493}. Namely, they proved the following
extension result.

\begin{theorem}
  \label{theorem-Loc-Lipschitz-v1:2}
  If $(X,d)$ is a metric space and $A\subset X$ is a closed subset,
  then each pointwise Lipschitz function $\varphi:A\to \R$ can be
  extended to a pointwise Lipschitz function $\Phi:X\to \R$.
\end{theorem}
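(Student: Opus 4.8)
The plan is to mimic, in the spirit of Theorem~\ref{theorem-Loc-Lipschitz-v1:1} and of the Pasch--Hausdorff construction, the McShane infimum formula, but with two modifications dictated by the fact that $\varphi$ is only \emph{pointwise} Lipschitz: the slope is allowed to vary from point to point, and the infimum is \emph{localised}, so that near each $a\in A$ the formula only involves the part of $\varphi$ on which it genuinely behaves like an $L_a$-Lipschitz function.

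First I would normalise the data. Using \eqref{eq:Lipschitz-ext:30}, for every $a\in A$ fix $L_a\ge 1$ and $\delta_a>0$ with $|\varphi(x)-\varphi(a)|\le L_a\,d(x,a)$ for all $x\in A\cap\mathbf{O}(a,\delta_a)$; shrinking $\delta_a$ I may assume $\delta_a\le 1$ and $L_a\delta_a\le 1$, and — the delicate point, see below — that the radii are \emph{stabilised}, in the sense that the balls $\mathbf{O}(a,\delta_a)$ carrying a large constant $L_a$ stay uniformly away from points at which $L$ is small. Next, pick for each $a$ a non-negative ``barrier'' $\beta_a\colon X\to[0,+\infty]$ which vanishes on $\mathbf{O}(a,\delta_a/2)$, is continuous and finite on $\mathbf{O}(a,\delta_a)$, tends to $+\infty$ as $d(\cdot,a)\uparrow\delta_a$, and equals $+\infty$ off $\mathbf{O}(a,\delta_a)$; then set
\[
  \Phi(x)=\inf_{a\in A}\bigl[\varphi(a)+L_a\,d(x,a)+\beta_a(x)\bigr],\qquad x\in X.
\]
On the open neighbourhood $V=\bigcup_{a\in A}\mathbf{O}(a,\delta_a)$ of $A$ this infimum is a real number (take $a$ with $d(x,a)<\delta_a/2$); at a point of $X\setminus V$ I would define $\Phi$ separately, using that such a point either has a neighbourhood disjoint from $A$, where any locally Lipschitz rule matching $\Phi\uhr V$ continuously works, or lies on $\partial V$, where the normalisations above — together with the stabilising choice of the $\delta_a$ — keep the contributing $a$ in a controlled region and prevent $\Phi\uhr V$ from escaping to $-\infty$, so that $\Phi$ extends continuously.

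It then remains to check three things. (i) \emph{$\Phi$ extends $\varphi$:} for $a_0\in A$, taking $a=a_0$ gives $\Phi(a_0)\le\varphi(a_0)$, whereas every $a$ contributing to the infimum satisfies $d(a_0,a)<\delta_a$, hence $\varphi(a_0)-\varphi(a)\le L_a\,d(a_0,a)$ and $\varphi(a)+L_a\,d(a_0,a)+\beta_a(a_0)\ge\varphi(a_0)$; so $\Phi(a_0)=\varphi(a_0)$. (ii) \emph{$\Phi$ is pointwise Lipschitz on $X\setminus A$:} on $X\setminus V$ this was arranged above, while at a point $x_0\in V\setminus A$ one has $d(x_0,A)>0$, so the only contributing $a$ satisfy $L_a<1/d(x_0,a)\le1/d(x_0,A)$, and those with $d(x_0,a)$ close to $\delta_a$ have large $\beta_a(x_0)$ and so are irrelevant; thus $\Phi$ is, locally, an infimum of uniformly Lipschitz functions, hence locally Lipschitz and therefore pointwise Lipschitz there. (iii) \emph{$\Phi$ is pointwise Lipschitz at each $a_0\in A$:} the bound $\Phi(x)\le\varphi(a_0)+L_{a_0}\,d(x,a_0)$ is immediate (take $a=a_0$); for the reverse bound, if $a$ contributes to $\Phi(x)$ for $x$ close to $a_0$ then $d(x,a)<\delta_a$ and $d(a_0,a)\le d(a_0,x)+d(x,a)$, so $|\varphi(a_0)-\varphi(a)|\le L_a\,d(a_0,a)$ and hence $\varphi(a)+L_a\,d(x,a)\ge\varphi(a_0)-L_a\,d(a_0,x)$; thus $\Phi(x)\ge\varphi(a_0)-L'\,d(a_0,x)$ once $L'$ bounds $L_a$ over all $a$ that can contribute in a fixed neighbourhood of $a_0$.

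The heart of the matter — and the step I expect to be the main obstacle — is making (iii) work, i.e.\ choosing the radii $\delta_a$ so small that for every $a_0\in A$ only a uniformly bounded family of slopes $L_a$ is ``visible'' near $a_0$; equivalently, balls $\mathbf{O}(a,\delta_a)$ with large $L_a$ must be kept away from $a_0$. This stabilisation is carried out much as one shrinks an open cover to a locally finite refinement, and it is precisely where the construction meets the Pasch--Hausdorff circle of ideas: one is inserting a pointwise Lipschitz function between the upper semi-continuous minorant ($\varphi$ on $A$, $-\infty$ elsewhere) and the lower semi-continuous majorant ($\varphi$ on $A$, $+\infty$ elsewhere) of the partially defined $\varphi$, exactly as Hausdorff inserted Lipschitz functions under semi-continuous ones. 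With the radii fixed accordingly, (i)--(iii) yield that $\Phi$ is the required pointwise Lipschitz extension of $\varphi$.
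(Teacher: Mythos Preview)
Your construction has a genuine gap at exactly the point you flag as ``the main obstacle'': the stabilisation of the radii cannot be carried out in general. You ask that for every $a_0\in A$ there be a neighbourhood in which only a \emph{bounded} family of slopes $L_a$ is visible, i.e.\ that balls $\mathbf{O}(a,\delta_a)$ with large $L_a$ stay away from $a_0$. But a pointwise Lipschitz function can have $\lip(\varphi,a)$ unbounded as $a\to a_0$ while $\lip(\varphi,a_0)$ is finite; for instance $\varphi(x)=x\sin(1/x)$ on $A=[0,1]$ with $\varphi(0)=0$ has $\lip(\varphi,0)\le 1$ yet $\lip(\varphi,a)\to\infty$ along suitable $a\to 0$. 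For such $a$, the point $a$ itself lies in $\mathbf{O}(a,\delta_a)$ and in every neighbourhood of $a_0$, so no shrinking of $\delta_a$ can keep these balls away from $a_0$. Consequently your lower bound in (iii), $\Phi(x)\ge\varphi(a_0)-L'\,d(a_0,x)$, fails because no finite $L'$ dominates all contributing $L_a$. (There is also a secondary issue in (iii): the inequality $|\varphi(a_0)-\varphi(a)|\le L_a\,d(a_0,a)$ requires $a_0\in\mathbf{O}(a,\delta_a)$, which does not follow from $x\in\mathbf{O}(a,\delta_a)$ for $x$ merely close to $a_0$.)

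The paper sidesteps this difficulty entirely by first composing with $\arctan$ to reduce to a \emph{bounded} pointwise Lipschitz function $\varphi:A\to(-M,M)$. The point is Proposition~\ref{proposition-Loc-Lipschitz-v3:2}: once $\varphi$ is bounded, each local Lipschitz constant can be upgraded to a \emph{global} one, $|\varphi(x)-\varphi(a)|\le\lambda_a\,d(x,a)$ for all $x\in A$. With global constants there is no need for barriers or localisation at all; the plain Pasch--Hausdorff formulas $\Phi_\pm(p)=\inf/\sup_{a\in A}[\varphi(a)\pm\lambda_a\,d(a,p)]$ are well defined on the whole of $X$, satisfy $\Phi_-\le\Phi_+$, and the pointwise Lipschitz property at $a_0\in A$ follows in one line from the single constant $\lambda_{a_0}$ via $\Phi_-\le\Phi_+$. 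Your attempt to work directly with genuinely local constants is what forces the unachievable stabilisation; the boundedness reduction is the missing idea.
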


In some sources, Theorem \ref{theorem-Loc-Lipschitz-v1:2} was wrongly
attributed as an extension result for locally Lipschitz functions, see
e.g.\ \cite[Theorem 4.1.7]{zbMATH07045619}. Furthermore, in other
sources, Theorem \ref{theorem-Loc-Lipschitz-v1:2} was even used as an
extension result for locally Lipschitz functions. For instance, it was
applied in \cite[Theorem 3.1]{MR253053} to show that a subset
$A\subset X$ of a metric space $(X,d)$ is closed if and only if each
(bounded) locally Lipschitz function $\varphi:A\to \R$ can be extended
to a locally Lipschitz function $\Phi:X\to \R$.  However, locally
Lipschitz extensions were not discussed by Czipszer and Geh\'er in
\cite{MR71493}. In fact, the author is not aware of any explicit
formula that will result in locally Lipschitz extensions. Such
extensions were implicitly obtained by Luukkainen and V\"ais\"al\"a in
\cite[Theorem 5.12]{MR515647} using a different technique. Namely,
Luukkainen and V\"ais\"al\"a showed that if $A$ is a closed subset of
a metric space $(X,d)$, then every locally Lipschitz map
$\varphi : A\to M$, where $M$ is an $n$-dimensional locally Lipschitz
manifold, has a locally Lipschitz extension to a neighbourhood of $A$
in $X$. Their proof was based on paracompactness of metrizable spaces,
hence relying heavily on the axiom of choice. \medskip

We are now ready to state also the main purpose of this paper. Namely,
in this paper we aim to fill in this gap and present simple and
self-contained proofs of several extension and approximation results
for Lipschitz-like functions.  In the next section, we briefly discuss
the Pasch-Hausdorff construction and its refinement in the setting of
Theorem \ref{theorem-Loc-Lipschitz-v1:1}. In Section
\ref{sec:pointw-lipsch-extens}, we also illustrate how this
construction can be applied to simplify the proof of Theorem
\ref{theorem-Loc-Lipschitz-v1:2}. In Section
\ref{sec:locally-lipsch-exten}, we show that Theorem
\ref{theorem-Loc-Lipschitz-v1:2} is still valid if ``pointwise
Lipschitz'' is replaced by ``locally Lipschitz'', see Theorem
\ref{theorem-Loc-Lipschitz-v5:1}. Our proof of this result is
implicitly based on countable paracompactness of metrizable
spaces. Briefly, using a construction of M. Mather and its refinement
for countable covers in \cite{MR1476756}, we give a direct simple
proof that any countable open cover of a metric space admits an
index-subordinated locally finite partition of unity consisting of
locally Lipschitz functions, see
Proposition~\ref{proposition-Loc-Lipschitz-v7:3}. This is a simplified
version of a more general result obtained by Z.~Frol\'{\i}k
\cite{MR814046}.  The last section of this paper contains several
applications of the approach developed in the previous sections. For
instance, we refine Haudorff's result that any bounded semi-continuous
function on a metric space is a pointwise limit of some monotone
sequence of Lipschitz functions. Namely, as commented in Remark
\ref{remark-Loc-Lipschitz-v16:3}, this is not true for unbounded
functions. Regarding the role of boundedness, we show that each
semi-continuous function defined on a metric space is a pointwise
limit of some monotone sequence of locally Lipschitz functions
(Theorem \ref{theorem-Loc-Lipschitz-v10:1}). As for semi-continuity,
we generalise the well-known property that each continuous function on
a metric space is a uniform limit of locally Lipschitz functions
(Theorem \ref{theorem-Loc-Lipschitz-v10:4}). Finally, we also give a
simple proof that each uniformly continuous function on a metric space
is a uniform limit of Lipschitz in the small functions.

\section{A Construction of Lipschitz Functions}

For a metric space $(X,d)$, a function $\varphi:X\to \R$ is
\emph{lower} (\emph{upper}) \emph{semi-continuous} if for every
$p\in X$,
\[
  \varliminf_{x\to p}\varphi(x)\geq \varphi(p)\quad
  \left(\text{respectively,}\ \varlimsup_{x\to p}\varphi(x)\leq
    \varphi(p)\right).
\]
Here,
$\varliminf_{x\to p}\varphi(x)=\sup_{\delta>0}\inf
\varphi(\mathbf{O}(p,\delta))$, see
\eqref{eq:Loc-Lipschitz-v18:2}. \medskip

Semi-continuity was introduced by Baire in his 1899 thesis
\cite{zbMATH02668286}, where he showed that each semi-continuous
function $\varphi:\R^m\to \R$ is a pointwise limit of continuous
functions, see also \cite{MR1504373,MR1400223}.  Subsequently, he
generalised this result in \cite{MR1504475} by showing that a function
$\varphi:\R^m\to \R$ is upper semi-continuous if and only if it is the
pointwise limit of a decreasing sequence of continuous functions.  In
1915, Tietze extended Baire's result to arbitrary metric
spaces. Namely, in \cite[Theorem 2]{tietze:15}, he showed that for a
metric space $(X,d)$, each upper semi-continuous bounded function
$\varphi:X\to \R$ is the pointwise limit of some decreasing sequence
of continuous functions $f_n:X\to \R$, $n\in\N$. Assuming that
$\varphi:X\to [\lambda,\mu]$ for some $\mu\geq\lambda>0$, Tietze
defined the required functions by the following explicit formula
\[
  f_n(p)=\sup_{x\in X}\frac{\varphi(x)}{[1+d(x,p)]^n},\quad p\in X.
\]
In 1919, Hausdorff \cite{hausdorff:19} gave a simple proof of this
result using another direct construction, which he credited to Moritz
Pasch. In fact, Hausdorff obtained the following stronger result as it
is evident from his proof.

\begin{theorem}[\cite{hausdorff:19}]
  \label{theorem-Loc-Lipschitz-v9:1}
  Let $(X,d)$ be a metric space and $\varphi:X\to \R$ be a function
  which is bounded from below. For each $\kappa>0$, define a function
  $f_\kappa:X\to \R$ by
  \begin{equation}
    \label{eq:Loc-Lipschitz-v23:1}
    f_\kappa(p)=\inf_{x\in X}[\varphi(x)+\kappa d(x,p)],\quad
    p\in X.
  \end{equation}
  Then $f_\kappa$ is $\kappa$-Lipschitz. If moreover $\varphi$ is
  lo\-wer semi-continuous, then $\{f_n:n\in\N\}$ is an increasing
  sequence which is pointwise convergent to $\varphi$.
\end{theorem}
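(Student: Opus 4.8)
The plan is to verify the three assertions in order, each reducing either to the triangle inequality or to an elementary $\epsilon$-argument. First I would check that $f_\kappa$ is real-valued and in fact dominated by $\varphi$: if $m\in\R$ is a lower bound for $\varphi$, then $\varphi(x)+\kappa d(x,p)\ge m$ for every $x\in X$, so $f_\kappa(p)\ge m>-\infty$; and taking $x=p$ gives $f_\kappa(p)\le\varphi(p)<+\infty$. Thus $f_\kappa:X\to\R$ is well defined and $f_\kappa\le\varphi$ pointwise, a fact I will reuse at the end.

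For the Lipschitz bound, fix $p,q\in X$. For any $x\in X$ the triangle inequality gives $d(x,p)\le d(x,q)+d(q,p)$, hence $\varphi(x)+\kappa d(x,p)\le\bigl(\varphi(x)+\kappa d(x,q)\bigr)+\kappa d(p,q)$; taking the infimum over $x$ yields $f_\kappa(p)\le f_\kappa(q)+\kappa d(p,q)$, and interchanging the roles of $p$ and $q$ gives $|f_\kappa(p)-f_\kappa(q)|\le\kappa d(p,q)$, so $f_\kappa$ is $\kappa$-Lipschitz. Monotonicity of $\{f_n\}$ is equally immediate: since $d(x,p)\ge 0$, the map $\kappa\mapsto\varphi(x)+\kappa d(x,p)$ is nondecreasing for each fixed $x$, hence so is $\kappa\mapsto f_\kappa(p)$, and in particular $f_n(p)\le f_{n+1}(p)$ for every $n$.

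It remains to show that $f_n(p)\to\varphi(p)$ when $\varphi$ is lower semi-continuous; this is the only step with any content. Since $f_n(p)\le\varphi(p)$ for all $n$, it suffices to prove $\sup_n f_n(p)\ge\varphi(p)$, i.e.\ that for each $\epsilon>0$ there is $n$ with $f_n(p)\ge\varphi(p)-\epsilon$. Fix such an $\epsilon$ and a lower bound $m$ for $\varphi$. Lower semi-continuity at $p$ yields $\delta>0$ with $\varphi(x)\ge\varphi(p)-\epsilon$ whenever $d(x,p)<\delta$. Choose $n\in\N$ so large that $m+n\delta\ge\varphi(p)-\epsilon$. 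For an arbitrary $x\in X$ I would split into two cases: if $d(x,p)<\delta$ then $\varphi(x)+nd(x,p)\ge\varphi(x)\ge\varphi(p)-\epsilon$, while if $d(x,p)\ge\delta$ then $\varphi(x)+nd(x,p)\ge m+n\delta\ge\varphi(p)-\epsilon$. Taking the infimum over $x$ gives $f_n(p)\ge\varphi(p)-\epsilon$, as required; letting $\epsilon\downarrow 0$ and combining with $f_n(p)\le\varphi(p)$ shows that the increasing sequence $\{f_n(p)\}$ converges to $\varphi(p)$.

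I expect no serious obstacle here; the one place that needs a little care is the last step, where one must combine the \emph{local} information coming from lower semi-continuity with the \emph{global} lower bound on $\varphi$ to produce a single $n$ that works uniformly in $x$. The two-case split above is exactly what makes this possible, and it is also the point where the hypothesis that $\varphi$ is bounded from below is genuinely used.
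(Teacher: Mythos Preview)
Your argument is correct. The well-definedness, the $\kappa$-Lipschitz bound, and the monotonicity in $\kappa$ are handled exactly as in the paper (triangle inequality, then infimum). The only place you diverge is in the convergence step.

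The paper argues by choosing almost-minimizers: for each $n$ it picks $x_n\in X$ with $\varphi(x_n)+n\,d(x_n,p)<f_n(p)+\tfrac1n$, bounds $d(x_n,p)$ using the global lower bound on $\varphi$ to force $x_n\to p$, and then invokes lower semi-continuity via $\varliminf_{n}\varphi(x_n)\ge\varphi(p)$. You instead fix $\epsilon$, extract a $\delta$ from lower semi-continuity at $p$, choose $n$ so that $m+n\delta\ge\varphi(p)-\epsilon$, and do a near/far split on $d(x,p)$. Both are standard and equally short. Your version is a bit more explicit (it produces a concrete threshold $n$ for each $\epsilon$) and avoids selecting a sequence; the paper's version is slightly slicker in that it needs no case distinction and reads off the limit directly from the definition of $\varliminf$. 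Either way, the hypothesis that $\varphi$ is bounded below enters at exactly the same point, as you correctly note.
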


\begin{proof}
  We present a brief sketch of the original proof.  For $p,q,x\in X$
  and $\kappa>0$, by the triangular inequality,
  $\kappa d(x,p)\leq \kappa d(x,q)+ \kappa d(q,p)$. Hence, by adding
  $\varphi(x)$ to both sides and taking infimum, we get that
  $f_\kappa(p)\leq f_\kappa(q)+\kappa d(q,p)$. Since $d(p,q)=d(q,p)$,
  this is equivalent to $|f_\kappa(p)-f_\kappa(q)|\leq \kappa d(p,q)$.
  It is also evident that
  $f_\kappa\leq f_{\kappa+\varepsilon}\leq \varphi$, whenever
  $\varepsilon\geq 0$. Finally, for each $n\in\N$, take $x_n\in X$
  with $\varphi(x_n)+n d(x_n,p)< f_n(p)+\frac1n$. Then
  $p=\lim_{n\to \infty}x_n$ because
  $d(x_n,p)<\frac1n\left[f_n(p)- \varphi(x_n)+\frac1n\right]\leq
  \frac1n\left[\varphi(p)- \inf_{x\in X}\varphi(x)+\frac1n\right]$. If
  $\varphi$ is lower semi-continuous, this implies that
  \[
    \varphi(p)\leq \varliminf_{x\to p}\varphi(x)\leq\varliminf_{n\to
      \infty}\varphi(x_n) \leq \lim_{n\to \infty}
    f_n(p)\leq \varphi(p).\qedhere
    \]
\end{proof}

Let us remark that in the original proof, see \cite[page
293]{hausdorff:19}, Hausdorff showed that $f_\kappa$ is
$(\kappa+1)$-Lipschitz, but the proof above is virtually the same.
Here is also a simple observation relating the case of upper
semi-continuous functions to that of Theorem
\ref{theorem-Loc-Lipschitz-v9:1}.

\begin{proposition}
  \label{proposition-Loc-Lipschitz:1}
  Let $(X,d)$ be a metric space and $\psi:X\to \R$ be a function which
  is bounded from above. For each $\kappa>0$, define a function
  $f_\kappa^*:X\to \R$ by
  \begin{equation}
    \label{eq:Loc-Lipschitz-v23:3}
    f_\kappa^*(p)=\sup_{x\in X}[\psi(x)-\kappa d(x,p)],\quad p\in X.  
  \end{equation}
  Then $f^*_\kappa=-f_k$, where $f_k$ is defined as in
  \eqref{eq:Loc-Lipschitz-v23:1} with respect to the additive inverse
  function $\varphi=-\psi$.
\end{proposition}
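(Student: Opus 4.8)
The plan is to observe that the identity is purely formal: it follows by applying the elementary fact that $-\inf S=\sup(-S)$, valid for every nonempty set $S\subset\R$ bounded from below, pointwise in $p$. First I would note that since $\psi$ is bounded from above, the function $\varphi=-\psi$ is bounded from below, so that $f_\kappa$ in \eqref{eq:Loc-Lipschitz-v23:1} is well defined (and, by Theorem \ref{theorem-Loc-Lipschitz-v9:1}, $\kappa$-Lipschitz); similarly, each set $S_p=\{-\psi(x)+\kappa d(x,p):x\in X\}$ is bounded from below because $\kappa d(x,p)\geq 0$ and $\psi$ is bounded above.

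Next, fixing $p\in X$ and $\kappa>0$, I would simply compute
\[
  -f_\kappa(p)=-\inf_{x\in X}\bigl[\varphi(x)+\kappa d(x,p)\bigr]
  =-\inf_{x\in X}\bigl[-\psi(x)+\kappa d(x,p)\bigr]
  =\sup_{x\in X}\bigl[\psi(x)-\kappa d(x,p)\bigr]=f^*_\kappa(p),
\]
where the third equality is the identity $-\inf S_p=\sup(-S_p)$. Since $p\in X$ was arbitrary, this yields $f^*_\kappa=-f_\kappa$, as claimed.

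There is essentially no obstacle to overcome here; the only point requiring a word of care is the well-definedness of the infimum defining $f_\kappa$ (equivalently, the boundedness below of $S_p$), which is exactly what the hypothesis on $\psi$ guarantees. I would close by remarking that, as an immediate consequence together with Theorem \ref{theorem-Loc-Lipschitz-v9:1}, each $f^*_\kappa$ is $\kappa$-Lipschitz and, when $\psi$ is upper semi-continuous, $\{f^*_n:n\in\N\}$ is a decreasing sequence pointwise convergent to $\psi$, which recovers the upper semi-continuous counterpart of Hausdorff's and Tietze's results.
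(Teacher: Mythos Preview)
Your proof is correct. The paper actually states this proposition without proof, presenting it as a simple observation; your pointwise verification via the identity $-\inf S=\sup(-S)$ is exactly the intended (and essentially only) argument, and your concluding remark about the upper semi-continuous consequences is precisely how the paper uses the proposition.
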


The Pasch-Hausdorff construction in \eqref{eq:Loc-Lipschitz-v23:1} and
its alternative interpretation in \eqref{eq:Loc-Lipschitz-v23:3} can
be applied to partial bounded functions preserving essentially the
same proof. Furthermore, they can be also applied to partial Lipschitz
functions to extend them by preserving the same Lipschitz constant.

\begin{theorem}
  \label{theorem-Loc-Lipschitz-v15:1}
  Let $(X,d)$ be a metric space, $A\subset X$ and $\varphi:A\to \R$ be
  a $\lambda$-Lipschitz function for some $\lambda\geq 0$. Define functions
  $\Phi_-:X\to (-\infty,+\infty]$ and $\Phi_+:X\to [-\infty,+\infty)$ by
  \begin{equation}
    \label{eq:Loc-Lipschitz-v23:2}
    \begin{cases}
      \Phi_-(p)=\sup_{a\in A}\left[\varphi(a)-\lambda d(a,p)\right] &\text{and}\\
     \Phi_+(p)=\inf_{a\in A}\left[\varphi(a)+\lambda d(a,p)\right],
      & p\in X.
    \end{cases}
  \end{equation}
  Then $\Phi_-\leq\Phi_+$ and\/ $\Phi_-,\Phi_+:X\to \R$ are
  $\lambda$-Lipschitz extensions of $\varphi$.
\end{theorem}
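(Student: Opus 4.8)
The plan is to reduce the whole statement to two applications of the triangle inequality, exactly as in the proof of Theorem~\ref{theorem-Loc-Lipschitz-v9:1}, the only genuinely new point being that the supremum and infimum are now taken over a proper (and possibly non-closed) subset $A$ of a function that need not be bounded, so that finiteness of $\Phi_-$ and $\Phi_+$ must be argued separately. Throughout I assume $A\neq\emptyset$, which is implicit in the requirement $\Phi_-,\Phi_+:X\to\R$; if $\lambda=0$ then $\varphi$ is constant and everything is immediate, so one may also keep $\lambda>0$ in mind although the argument does not need this.

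\medskip

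\emph{Step 1: the comparison $\Phi_-\leq\Phi_+$ and finiteness.} For $a,b\in A$ and $p\in X$, the $\lambda$-Lipschitz property of $\varphi$ together with the triangle inequality gives $\varphi(a)-\varphi(b)\leq\lambda d(a,b)\leq\lambda d(a,p)+\lambda d(p,b)$, hence $\varphi(a)-\lambda d(a,p)\leq\varphi(b)+\lambda d(b,p)$. Taking the supremum over $a\in A$ on the left and the infimum over $b\in A$ on the right yields $\Phi_-(p)\leq\Phi_+(p)$. Fixing an arbitrary $a_0\in A$, the same inequalities (with $b=a_0$ on one side and $a=a_0$ on the other) give
\[
  \varphi(a_0)-\lambda d(a_0,p)\leq\Phi_-(p)\leq\Phi_+(p)\leq\varphi(a_0)+\lambda d(a_0,p),
\]
so both $\Phi_-(p)$ and $\Phi_+(p)$ are real numbers; thus $\Phi_-,\Phi_+:X\to\R$.

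\medskip

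\emph{Step 2: extension.} If $p\in A$, then for every $a\in A$ we have $\varphi(a)-\lambda d(a,p)\leq\varphi(p)\leq\varphi(a)+\lambda d(a,p)$ by the $\lambda$-Lipschitz property, and both inequalities are equalities when $a=p$. Hence $\Phi_-(p)=\varphi(p)=\Phi_+(p)$, i.e.\ $\Phi_-\uhr A=\Phi_+\uhr A=\varphi$.

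\medskip

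\emph{Step 3: the Lipschitz estimate.} This is verbatim the computation from the proof of Theorem~\ref{theorem-Loc-Lipschitz-v9:1}, applied to the partial function $\varphi$ (alternatively one may invoke that theorem and Proposition~\ref{proposition-Loc-Lipschitz:1} directly, reading the infima/suprema as taken over $A$): for $p,q\in X$ and $a\in A$, $\lambda d(a,p)\leq\lambda d(a,q)+\lambda d(q,p)$, so adding $\varphi(a)$ and taking the infimum over $a\in A$ gives $\Phi_+(p)\leq\Phi_+(q)+\lambda d(q,p)$, and by the symmetry of $d$ we get $|\Phi_+(p)-\Phi_+(q)|\leq\lambda d(p,q)$. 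The analogous argument with suprema — or Proposition~\ref{proposition-Loc-Lipschitz:1} — shows that $\Phi_-$ is $\lambda$-Lipschitz as well, which completes the proof. I do not anticipate a real obstacle here: the only place requiring a moment's care is the finiteness in Step~1, and it is settled by sandwiching $\Phi_\pm(p)$ between the two affine functions $\varphi(a_0)\mp\lambda d(a_0,p)$.
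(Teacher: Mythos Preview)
Your proof is correct and follows essentially the same approach as the paper: the key inequality $\varphi(a)-\lambda d(a,p)\leq\varphi(b)+\lambda d(b,p)$ via the triangle inequality gives $\Phi_-\leq\Phi_+$, the extension property follows by taking $p\in A$, and the $\lambda$-Lipschitz estimate is the Pasch--Hausdorff argument of Theorem~\ref{theorem-Loc-Lipschitz-v9:1} with the infimum/supremum restricted to $A$. The only cosmetic difference is that you spell out finiteness with an explicit sandwich around a fixed $a_0\in A$, whereas the paper leaves it implicit in the combination of $\Phi_-\leq\Phi_+$ with the declared codomains $(-\infty,+\infty]$ and $[-\infty,+\infty)$.
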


\begin{proof}
  Let $a,b\in A$ and $p\in X$. Since $\varphi$ is $\lambda$-Lipschitz
  \[
    \varphi(a)-\varphi(b)\leq \lambda d(a,b)\leq \lambda
    d(a,p)+ \lambda d(p,b).
  \]
  Accordingly, $\Phi_-(p)\leq \Phi_+(p)$ because
  $\varphi(a)-\lambda d(a,p)\leq \varphi(b)+\lambda d(b,p)$. By taking
  $p\in A$, this implies that
  $\varphi(p)\leq \Phi_-(p)\leq \Phi_+(p)\leq \varphi(p)$, so
  $\Phi_-\uhr A=\varphi= \Phi_+\uhr A$.  In fact, the function
  $\Phi_+$ corresponds to the function $f_\lambda$ in
  \eqref{eq:Loc-Lipschitz-v23:1}, the only difference is that now the
  infimum is taken on the points of $A$. Hence, the same argument as
  in Theorem \ref{theorem-Loc-Lipschitz-v9:1} shows that it is
  $\lambda$-Lipschitz.  Accordingly, $\Phi_-$ is also
  $\lambda$-Lipschitz, see Proposition
  \ref{proposition-Loc-Lipschitz:1}. 
\end{proof}

\begin{remark}
  \label{remark-Loc-Lipschitz-v16:1}
  The Lipschitz extension $\Phi_-$ in Theorem
  \ref{theorem-Loc-Lipschitz-v15:1} represents McShane and Whitney's
  approach to show Theorem \ref{theorem-Loc-Lipschitz-v1:1}. The other
  extension $\Phi_+$ was used by Czipszer and Geh\'er for their
  alternative proof of this theorem. The advantage of using both
  functions $\Phi_-$ and $\Phi_+$, as done in Theorem
  \ref{theorem-Loc-Lipschitz-v15:1}, lies in the simplification of the
  proof. Furthermore, as pointed out implicitly in \cite{MR71493},
  $\Phi_-\leq f\leq \Phi_+$ for every $\lambda$-Lipschitz extension
  $f:X\to \R$ of $\varphi:A\to \R$.  Indeed, such a function $f$ is a
  $\lambda$-Lipschitz extension of itself and by Theorem
  \ref{theorem-Loc-Lipschitz-v15:1},
  \[
    \Phi_-(p)=\sup_{a\in
    A}[\varphi(a)-\lambda d(a,p)]\leq \sup_{x\in
    X}[f(x)-\lambda d(x,p)]=f(p),\quad p\in X.
\]
Similarly, $f\leq \Phi_+$.  Thus, $\Phi_-$ is the smallest possible
$\lambda$-Lipschitz extension of $\varphi$, while $\Phi_+$ --- the
largest one.\qed
\end{remark}

\begin{remark}
  \label{remark-Loc-Lipschitz-v16:2}
  For a metric space $(X,d)$, a bounded function $\varphi:X\to \R$ and
  $\kappa>0$, let $f_\kappa:X\to \R$ be defined as in
  \eqref{eq:Loc-Lipschitz-v23:1}. Then $f_\kappa$ is the greatest
  $\kappa$-Lipschitz function with $f_\kappa\leq \varphi$. Indeed, for
  a $\kappa$-Lipschitz function $f:X\to \R$ with $f\leq \varphi$, by
  Theorem \ref{theorem-Loc-Lipschitz-v15:1} (applied with $A=X$ and
  $\varphi=f$),
  \[
    f(p)=\inf_{x\in X}\left[f(x)+\kappa d(x,p)\right]
  \leq \inf_{x\in X}\left[\varphi(x)+\kappa
    d(x,p)\right]=f_\kappa(p),\quad p\in X.
  \]
  Based on this property, the functions $f_\kappa$, $\kappa>0$, are
  often called the \emph{Pasch-Hausdorff envelope} of $\varphi$. One
  can easily see that $f=\lim_{n\to \infty}f_n$ is the largest lower
  semi-continuous function $f:X\to \R$ with $f\leq \varphi$. In fact,
  $f$ is the well-known \emph{lower Baire function} associated to
  $\varphi$ because $f(p)=\varliminf_{x\to p}\varphi(x)$, $p\in
  X$. \qed
\end{remark}

\begin{remark}
  \label{remark-Loc-Lipschitz-v16:3}
  The restriction in Theorem \ref{theorem-Loc-Lipschitz-v9:1} that
  $\varphi$ is bounded from below is necessary. Indeed, let $X=\R$ be
  the real line endowed with an equivalent bounded metric, for
  instance $d(x,y)=\frac{|x-y|}{1+|x-y|}$, $x,y\in\R$, and
  $\varphi:X\to \R$ be the identity. Then for any
  $\kappa$-Lipschitz function $f:X\to \R$, as in Remark
  \ref{remark-Loc-Lipschitz-v16:2}, it follows that
  \[
    \inf_{x\in X}\left[x+\kappa d(x,p)\right]=-\infty <f(p)=
    \inf_{x\in X}\left[f(x)+\kappa d(x,p)\right], \quad p\in X.
  \]
  Accordingly, the inequality $f\leq \varphi$ is impossible.\qed
\end{remark}

\section{Pointwise-Lipschitz Extensions}
\label{sec:pointw-lipsch-extens}

Let $(X,d)$ and $(Y,\rho)$ be metric spaces. Following the Lipschitz
condition \eqref{eq:Loc-Lipschitz-v1:1}, we shall say that a map
$f:X\to Y$ is \emph{globally
  Lipschitz} at a point $p\in X$ if there exists $K_p\geq 0$ such that 
\begin{equation}
  \label{eq:Loc-Lipschitz-v4:1}
  \rho(f(x),f(p))\leq K_p\, d(x,p),\quad \text{for every $x\in X$.}
\end{equation}
In this case, $K_p$ will be called a \emph{global Lipschitz constant
  at $p\in X$}. Here is an example of maps which are globally
Lipschitz at each point of their domain. 

\begin{proposition}
  \label{proposition-Loc-Lipschitz-v3:2}
  If $(X,d)$ and $(Y,\rho)$ are metric spaces, then each bounded
  pointwise Lipschitz map $f:X\to Y$ is globally Lipschitz at each
  point of $X$. 
\end{proposition}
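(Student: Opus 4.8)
The plan is to fix a point $p\in X$ and split the domain into the points close to $p$, where the pointwise Lipschitz condition \eqref{eq:Lipschitz-ext:30} already gives a linear estimate, and the points far from $p$, where boundedness of $f$ supplies the estimate essentially for free. The point is that on the far region the quotient $\rho(f(x),f(p))/d(x,p)$ is automatically controlled because the numerator is bounded while the denominator is bounded below.

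First I would set $M=\sup\{\rho(f(x),f(y)):x,y\in X\}<+\infty$, which is finite since $f$ is bounded. Fixing $p\in X$, recall that by \eqref{eq:Lipschitz-ext:30} there exist $L_p\geq 0$ and $\delta_p>0$ with $\rho(f(x),f(p))\leq L_p\,d(x,p)$ for every $x\in\mathbf{O}(p,\delta_p)$; when $p$ is isolated this is trivially true with $L_p=0$ upon choosing $\delta_p>0$ so small that $\mathbf{O}(p,\delta_p)=\{p\}$. Next, for any $x\in X\setminus\mathbf{O}(p,\delta_p)$ one has $d(x,p)\geq\delta_p$, so
\[
  \rho(f(x),f(p))\leq M\leq \frac{M}{\delta_p}\,d(x,p).
\]
Combining the two cases, the constant $K_p=\max\{L_p,\,M/\delta_p\}$ satisfies $\rho(f(x),f(p))\leq K_p\,d(x,p)$ for every $x\in X$, which is exactly \eqref{eq:Loc-Lipschitz-v4:1}; hence $f$ is globally Lipschitz at $p$.

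I do not expect a genuine obstacle here: no partition of unity, compactness, or choice-type argument is needed, and the whole content is the elementary far/near dichotomy above. The only minor points to state carefully are the precise meaning of ``bounded'' (finite diameter of the image) and the trivial handling of isolated points, both of which are dispatched in a line.
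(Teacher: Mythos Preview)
Your proof is correct and is essentially identical to the paper's own argument: the paper also fixes $M$ bounding the diameter of $f(X)$, takes $L_p$ and $\delta_p$ from \eqref{eq:Lipschitz-ext:30}, and sets $K_p=\max\{L_p,\,M/\delta_p\}$, with the near/far dichotomy you spell out being exactly the implicit verification of \eqref{eq:Loc-Lipschitz-v4:1}.
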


\begin{proof}
  Let $M\geq 0$ be such that $\rho(f(x),f(z))\leq M$, for all
  $x,z\in X$. Also, for a point $p\in X$, let $L_p\geq 0$ and
  $\delta_p>0$ be as in \eqref{eq:Lipschitz-ext:30}. Then
  $K_p= \max\left\{L_p,\frac M{\delta_p}\right\}$ is as in
  \eqref{eq:Loc-Lipschitz-v4:1}.
\end{proof}

Proposition \ref{proposition-Loc-Lipschitz-v3:2} allows to apply the
construction in Theorem \ref{theorem-Loc-Lipschitz-v15:1}, but now
using some fixed global Lipschitz constants of a bounded pointwise
Lipschitz function. Namely, let $(X,d)$ be a metric space and
$A\subset X$. Whenever $\varphi:A\to \R$ is a bounded pointwise
Lipschitz function and $\lambda_a$, $a\in A$, are fixed global
Lipschitz constants of $\varphi$, following
\eqref{eq:Loc-Lipschitz-v23:2}, we may associate the pair of extended
functions $\Phi_-:X\to (-\infty,+\infty]$ and
$\Phi_+:X\to [-\infty,+\infty)$ defined by
  \begin{equation}
    \label{eq:Loc-Lipschitz-v4:2}
    \begin{cases}
      \Phi_-(p)=\sup_{a\in A}\left[\varphi(a)-\lambda_a d(a,p)\right] &\text{and}\\
      \Phi_+(p)=\inf_{a\in A}\left[\varphi(a)+\lambda_a
        d(a,p)\right], &\text{for every $p\in X$.}
    \end{cases}
  \end{equation}

  In this general setting, just as in Theorem
  \ref{theorem-Loc-Lipschitz-v15:1}, we have that 
\begin{equation}
  \label{eq:Loc-Lipschitz-v18:1}
  \Phi_-\leq \Phi_+\quad\text{and}\quad \Phi_-\uhr
  A=\varphi=\Phi_+\uhr A. 
\end{equation}
Indeed, for $a,b\in A$, let
$\lambda_{ab}=\min\{\lambda_a,\lambda_b\}$. Then for every $p\in X$,
it follows from \eqref{eq:Loc-Lipschitz-v4:1} that
$\varphi(a)-\varphi(b)\leq \lambda_{ab} d(a,b)\leq \lambda_a d(a,p) +
\lambda_b d(b,p)$.  In other words,
$\varphi(a)-\lambda_a d(a,p)\leq \varphi(b)+\lambda_b d(b,p)$ which is
clearly equivalent to $\Phi_-(p)\leq \Phi_+(p)$. Similarly, taking
$p\in A$, it follows from \eqref{eq:Loc-Lipschitz-v4:2} that
$\varphi(p)\leq \Phi_-(p)\leq \Phi_+(p)\leq \varphi(p)$, therefore
$\Phi_-\uhr A=\varphi= \Phi_+\uhr A$. \medskip

The approach of using both functions $\Phi_-$ and $\Phi_+$ is also
successful in simplifying the original proof of Theorem
\ref{theorem-Loc-Lipschitz-v1:2}, see \cite[Theorem II]{MR71493}. The
simplification is essentially in the following special case of this
theorem.

\begin{theorem}
  \label{theorem-Loc-Lipschitz-v17:1}
  If $(X,d)$ is a metric space, $A\subset X$ is a closed set and
  $M>0$, then each pointwise Lipschitz function $\varphi:A\to (-M,M)$
  can be extended to a pointwise Lipschitz function
  $\Phi:X\to (-M,M)$.
\end{theorem}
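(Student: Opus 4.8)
The plan is to build $\Phi$ out of the two envelopes in \eqref{eq:Loc-Lipschitz-v4:2} together with the constant function $0$ (the case $A=\emptyset$ being trivial). Since $\varphi$ is bounded, Proposition~\ref{proposition-Loc-Lipschitz-v3:2} lets me fix, for each $a\in A$, a global Lipschitz constant $\lambda_a$ of $\varphi$ at $a$, and after replacing each by its maximum with $1$ I may assume $\lambda_a\geq1$. Let $\Phi_-,\Phi_+$ be as in \eqref{eq:Loc-Lipschitz-v4:2}. These are real-valued, and by \eqref{eq:Loc-Lipschitz-v18:1} they satisfy $\Phi_-\leq\Phi_+$ and $\Phi_-\uhr A=\varphi=\Phi_+\uhr A$; moreover $\Phi_+\geq-M$ and $\Phi_-\leq M$ since every summand $\varphi(a)+\lambda_a d(a,p)$ is $>-M$ and every summand $\varphi(a)-\lambda_a d(a,p)$ is $<M$. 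Furthermore $\Phi_+>-M$ everywhere: if $\Phi_+(p)=-M$ then some $a_n\in A$ satisfy $\varphi(a_n)\to-M$ and $\lambda_{a_n}d(a_n,p)\to0$, so $d(a_n,p)\to0$ (as $\lambda_{a_n}\geq1$), hence $p\in\overline A=A$ and $\Phi_+(p)=\varphi(p)>-M$, a contradiction. Symmetrically $\Phi_-<M$ everywhere.

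The crucial point is that $\Phi_+$ is pointwise Lipschitz on all of $X$. On the open set $X\setminus A$ this is clear, and in fact $\Phi_+$ is locally Lipschitz there: near a point $p$ with $r=d(p,A)>0$ only the indices $a$ with $\lambda_a$ below a suitable constant $\Lambda(p)$ affect the infimum defining $\Phi_+$ on $\mathbf{O}(p,r/4)$, so on that ball $\Phi_+$ is an infimum of $\Lambda(p)$-Lipschitz functions bounded below by $-M$, hence $\Lambda(p)$-Lipschitz. The real work is at a point $p\in A$. The upper estimate $\Phi_+(x)\leq\varphi(p)+\lambda_p\,d(x,p)$, with $\lambda_p$ a global Lipschitz constant of $\varphi$ at $p$, is immediate from \eqref{eq:Loc-Lipschitz-v4:2} on taking $a=p$. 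For the matching lower estimate I pick $L_p\geq0$ and $\delta_p>0$ as in \eqref{eq:Lipschitz-ext:30}, put $C_p=\max\{L_p,\lambda_p,3(M+\varphi(p))/(2\delta_p)\}$, and claim that $\varphi(a)+\lambda_a d(a,x)\geq\varphi(p)-C_p\,d(x,p)$ for every $a\in A$ as soon as $d(x,p)\leq\delta_p/3$. Indeed: if $\lambda_a\leq C_p$ this follows from $|\varphi(a)-\varphi(p)|\leq\lambda_a d(a,p)$ and the triangle inequality; if $\lambda_a>C_p$ but $d(a,p)<\delta_p$ it follows from \eqref{eq:Lipschitz-ext:30} (note $C_p\geq L_p$); and if $\lambda_a>C_p$ and $d(a,p)\geq\delta_p$, then $d(a,x)\geq2\delta_p/3$ forces $\lambda_a d(a,x)>M+\varphi(p)$, so $\varphi(a)+\lambda_a d(a,x)>\varphi(p)$. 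Taking the infimum over $a$ yields $\lip(\Phi_+,p)\leq C_p<\infty$. Applying the same reasoning to $-\varphi$ shows $\Phi_-$ is pointwise Lipschitz on $X$ as well.

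It then remains to set $\Phi=\min\{\Phi_+,\max\{\Phi_-,0\}\}$, i.e.\ the pointwise median of $\Phi_-$, $\Phi_+$ and $0$ (using $\Phi_-\leq\Phi_+$). As a combination of finitely many pointwise Lipschitz functions through maxima and minima, $\Phi$ is pointwise Lipschitz. On $A$ we have $\Phi_-=\varphi=\Phi_+$, so $\Phi\uhr A=\varphi$; and at every point $\Phi$ equals one of $\Phi_-$, $\Phi_+$, $0$, whence the strict inequalities $\Phi_-<M$ and $\Phi_+>-M$ give $\Phi(X)\subset(-M,M)$, as required.

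I expect the lower estimate for $\Phi_+$ at points $p\in A$ to be the only genuine obstacle. The difficulty is that $\lip(\varphi,\cdot)$ need not be bounded near $p$, so the constants $\lambda_a$ for $a$ close to $p$ need not be controlled; the case analysis above circumvents this by balancing the size of $\lambda_a$ against the distance $d(a,p)$ — using the local Lipschitz data at $p$ from \eqref{eq:Lipschitz-ext:30} when $a$ is close to $p$, and the bound $\varphi(a)>-M$ (via $d(a,x)\geq2\delta_p/3$) to discard the far-away indices $a$ carrying a large Lipschitz constant.
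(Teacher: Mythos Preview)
Your argument is correct, and the overall strategy---build the two envelopes $\Phi_\pm$ of \eqref{eq:Loc-Lipschitz-v4:2}, prove they are pointwise Lipschitz, and then truncate to land in $(-M,M)$---is exactly the paper's. The execution, however, diverges in two places. First, for the lower bound $\Phi_+(x)\geq\varphi(p)-C_p\,d(x,p)$ at $p\in A$, the paper obtains it in one line from the very inequality you quote, $\Phi_-\leq\Phi_+$: since $\varphi(p)-\lambda_p d(p,x)\leq\Phi_-(x)\leq\Phi_+(x)$, the global Lipschitz constant $\lambda_p$ already serves as the pointwise one. Your three-case analysis is sound but re-creates precisely the complication the paper set out to avoid (cf.\ Remark~\ref{remark-Loc-Lipschitz-v18:2}); you never exploit $\Phi_-$ at this step, which is the paper's main point. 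Second, for squeezing the extension into $(-M,M)$, the paper takes $\Phi=\dfrac{\Phi_-+\Phi_+}{2+d(\cdot,A)}$ and checks $|\Phi_-+\Phi_+|\leq 2M$, while you use the median $\min\{\Phi_+,\max\{\Phi_-,0\}\}$ after establishing the strict bounds $\Phi_+>-M$ and $\Phi_-<M$ (which required normalising $\lambda_a\geq1$). Both devices work; yours is arguably more transparent but needs the extra strictness argument, whereas the paper's avoids it at the cost of a product-and-quotient that one must check is still pointwise Lipschitz.
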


\begin{proof}
  By Proposition~\ref{proposition-Loc-Lipschitz-v3:2}, a pointwise
  Lipschitz function $\varphi:A\to (-M,M)$ is globally Lipschitz at
  each point of $A$. So, we may associate the extended functions
  $\Phi_-:X\to (-\infty,+\infty]$ and $\Phi_+:X\to [-\infty,+\infty)$
  defined as in \eqref{eq:Loc-Lipschitz-v4:2} with respect to some
  global Lipschitz constants $\lambda_a$, $a\in A$, of $\varphi$.
  Then by \eqref{eq:Loc-Lipschitz-v18:1}, $\Phi_-$ and $\Phi_+$ are
  extensions of $\varphi$ with $\Phi_-\leq \Phi_+$ and, in particular,
  usual functions.  We will show that they are pointwise Lipschitz as
  well. Since $-\varphi:A\to (-M,M)$ is also pointwise Lipschitz, just
  as in Proposition \ref{proposition-Loc-Lipschitz:1}, this is reduced
  to showing that one of these functions, say $\Phi_+$, is pointwise
  Lipschitz.  So, take a point $p\in X$.\medskip

  If $p\in A$ and $x\in X$, then by \eqref{eq:Loc-Lipschitz-v18:1},
  $\Phi_-(x)\leq \Phi_+(x)$ and $\Phi_+(p)=\varphi(p)$. Hence, it
  follows from \eqref{eq:Loc-Lipschitz-v4:2} that
  \[
    \Phi_+(p)-\lambda_p d(p,x)\leq \Phi_-(x)\leq \Phi_+(x)\leq
    \Phi_+(p)+ \lambda_p d(p,x).
  \]
  Thus, $|\Phi_+(x)-\Phi_+(p)|\leq \lambda_p d(p,x)$.\medskip

  Otherwise, if $p\notin A$, we will show that $\Phi_+$ is Lipschitz
  in the open $\delta$-ball $\mathbf{O}(p,\delta)$, see
  \eqref{eq:Loc-Lipschitz-v18:2}, where $\delta=\frac{d(p,A)}2>0$ is
  the half distance between $p$ and the closed set $A$. To this end,
  fix a point $c\in A$ with $d(c,p)\leq 3\delta$. Then by
  \eqref{eq:Loc-Lipschitz-v4:2},
  \begin{equation}
    \label{eq:Loc-Lipschitz-v19:1}
    \Phi_+(x)\leq \varphi(c)+\lambda_{c}
    d(c,x) \leq M+ 4\delta \lambda_{c}=M_c,\quad \text{for all $x\in
    \mathbf{O}(p,\delta)$.}
  \end{equation}    
  Take now $x,y\in \mathbf{O}(p,\delta)$ and any $\varepsilon>0$. By
  \eqref{eq:Loc-Lipschitz-v4:2},
  $\Phi_+(x)+\varepsilon\geq \varphi(a)+ \lambda_a d(a,x)$ for some
  $a\in A$.  Therefore, by \eqref{eq:Loc-Lipschitz-v19:1},
  $\lambda_a\leq \frac{\Phi_+(x)-\varphi(a)+\varepsilon}{d(a,x)}\leq
  \frac{M_c+M+\varepsilon}\delta$ because $d(a,x)\geq\delta$.  For
  this $a\in A$, we also have that
  $\Phi_+(y)\leq \varphi(a)+\lambda_a d(a,y)$. Accordingly,
  \begin{align*}
    \Phi_{+}(y)-\Phi_+(x) &\leq \lambda_a d(y,a)-\lambda_a d(x,a)+\varepsilon\\
                        &\leq \lambda_a d(y,x)+\varepsilon\leq
                          \frac{M_c+M+\varepsilon}\delta d(y,x)+ 
                          \varepsilon.
  \end{align*}
  Thus, $\Phi_+(y)-\Phi_+(x) \leq \frac{M_c+M}\delta d(y,x)$ and, by
  symmetry, $\Phi_+$ is Lipschitz in $\mathbf{O}(p,\delta)$.\smallskip

  We may now define $\Phi=\frac{\Phi_-+\Phi_+}{2+d(\cdot, A)}$, where
  $d(\cdot,A)$ is the distance function. Then $\Phi$ remains a
  pointwise Lipschitz extension of $\varphi$ because
  $\frac1{2+d(\cdot,A)}$ is a bounded Lipschitz function whose
  restriction on $A$ is the constant $\frac12$. Moreover, $\Phi$ takes
  values in $(-M,M)$. Indeed, this is evident for the points of $A$
  because $\Phi\uhr A=\varphi$. For the rest of the points of $X$, it
  suffices to see that $|\Phi_+(p)+\Phi_-(p)|\leq 2M$ because
  $d(p,A)>0$, whenever $p\notin A$. The latter property follows easily
  from \eqref{eq:Loc-Lipschitz-v4:2} because for $a\in A$, we have
  that   $\Phi_+(p) +\varphi(a) -\lambda_a d(a,p)\leq 2\varphi(a)\leq 2M$.
  Hence, taking supremum on $A$, we get that $\Phi_+(p)+\Phi_-(p)\leq
  2M$. Similarly, we also have that $\Phi_-(p)+\Phi_+(p)\geq
  -2M$.
\end{proof}

We conclude this section with several remarks regarding the proper
place of Theorem \ref{theorem-Loc-Lipschitz-v17:1}.

\begin{remark}
  \label{remark-Loc-Lipschitz-v23:1}
  Theorem \ref{theorem-Loc-Lipschitz-v1:2} follows easily from Theorem
  \ref{theorem-Loc-Lipschitz-v17:1}. For instance, as in
  \cite{MR71493}, to a pointwise Lipschitz function $\varphi:A\to \R$
  one can associate the function
  \[
    \tilde{\varphi}=\arctan\circ \varphi:A\xrightarrow[]{\varphi} \R
    \xrightarrow[]{\arctan} \left(-\frac\pi2,\frac\pi2\right).
  \]
  Then $\tilde{\varphi}$ is also pointwise Lipschitz because
  $\arctan:\R\to\left(-\frac\pi2,\frac\pi2\right)$ is itself
  Lipschitz.  Hence, by Theorem \ref{theorem-Loc-Lipschitz-v17:1},
  $\tilde{\varphi}$ has a pointwise Lipschitz extension
  ${\tilde{\Phi}:X\to
    \left(-\frac\pi2,\frac\pi2\right)}$. Finally, since
  $\tan:\left(-\frac\pi2,\frac\pi2\right)\to
  \R$ is locally Lipschitz, we may define the required extension of
  $\varphi$ by $\Phi=\tan\circ \tilde{\Phi}:X\to \R$.\qed
\end{remark}

\begin{remark}
  \label{remark-Loc-Lipschitz-v18:1}
  One can easily see that in Theorems \ref{theorem-Loc-Lipschitz-v1:2}
  and \ref{theorem-Loc-Lipschitz-v17:1}, the requirement $A$ to be
  closed is essential. For instance, the function
  $\frac{t}{|t|}:[-1,0)\cup(0,1]\to \{-1,1\}$ is even locally
  Lipschitz, but cannot be extended continuously on $[-1,1]$.\qed
\end{remark}

\begin{remark}
  \label{remark-Loc-Lipschitz-v18:2}
  According to the proof of Theorem \ref{theorem-Loc-Lipschitz-v17:1},
  the extensions $\Phi_-$ and $\Phi_+$, defined as in
  \eqref{eq:Loc-Lipschitz-v4:2}, are locally Lipschitz at the points
  of $X\setminus A$. Our proof of this fact is essentially the same as
  the original proof in \cite[Theorem II]{MR71493}. In contrast, the
  original proof that $\Phi_+$ is pointwise Lipschitz at the points of
  $A$ is too complicated. The same proof of this fact was also given
  in \cite[Theorem 4.1.7]{zbMATH07045619}. In both sources, for points
  $p\in A$ and $x\in X$, it was used the definition of $\Phi_+$ in
  \eqref{eq:Loc-Lipschitz-v4:2} to get at once that
  $\Phi_+(x)-\Phi_+(p)\leq \lambda_p d(x,p)$. However, the other
  inequality that $\Phi_+(x)-\Phi_+(p)\geq -\lambda_p d(x,p)$ was
  obtained by taking $a\in A\setminus\{p\}$ and manipulating the
  inequalities $\varphi(a)-\varphi(p)\geq -\lambda_a d(a,p)$ and
  $\varphi(a)-\varphi(p)\geq -\lambda_p d(a,p)$; in fact, by
  multiplying the first one by $\frac{d(a,x)}{d(a,x)+d(x,p)}$ and the
  second one by $\frac{d(x,p)}{d(a,x)+d(x,p)}$. The other benefit of
  using both functions $\Phi_-$ and $\Phi_+$ is the immediate
  construction of the extension $\Phi:X\to (-M,M)$. Namely, in both
  \cite{MR71493} and \cite{zbMATH07045619}, this extension is obtained
  by refining the construction of $\Phi_+$ to get a pointwise
  Lipschitz extension $\Psi_+:X\to (-M,M]$ of $\varphi$. Next,
  applying the same construction to $-\varphi$, to get a similar
  pointwise Lipschitz extension $\Psi_-:X\to [-M,M)$ of
  $-\varphi$. Then the required extension was defined by
  $\Psi=\frac{\Psi_+-\Psi_-}2$. \qed
\end{remark}

\section{Locally Lipschitz Extensions}
\label{sec:locally-lipsch-exten}

In this section, we will show the following extension theorem.

\begin{theorem}
  \label{theorem-Loc-Lipschitz-v5:1}
  Let $(X,d)$ be a metric space and $A\subset X$ be a closed set. Then
  each locally Lipschitz function $\varphi:A\to \R$ can be extended to a
  locally Lipschitz function $\Phi:X\to \R$. 
\end{theorem}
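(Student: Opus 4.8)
The plan is to reduce the problem to a partition-of-unity argument applied to a suitable countable open cover of $X$. First I would handle the easy local structure near $A$: since $\varphi$ is locally Lipschitz on the closed set $A$, each point $a\in A$ lies in a set relatively open in $A$ on which $\varphi$ is Lipschitz, and by the McShane--Whitney theorem (Theorem~\ref{theorem-Loc-Lipschitz-v1:1}) each such local Lipschitz piece extends to a Lipschitz function on a small ball in $X$. The problem is to glue these local extensions across $X\setminus A$ in a way that stays locally Lipschitz globally. For this I would first reduce to a bounded target: as in Remark~\ref{remark-Loc-Lipschitz-v23:1}, compose with $\arctan$ so that it suffices to extend a locally Lipschitz $\varphi:A\to(-M,M)$ to a locally Lipschitz $\Phi:X\to\R$ (or even into $(-M,M)$), since $\tan$ is locally Lipschitz. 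Even better, I would note that Theorem~\ref{theorem-Loc-Lipschitz-v17:1} already gives a \emph{pointwise} Lipschitz extension; the job is to upgrade it to a genuinely locally Lipschitz one, and the obstruction is only at the points of $A$, since the proof of Theorem~\ref{theorem-Loc-Lipschitz-v17:1} (see Remark~\ref{remark-Loc-Lipschitz-v18:2}) already shows $\Phi_\pm$ are locally Lipschitz on $X\setminus A$.

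The heart of the argument is a locally Lipschitz partition of unity subordinated to a countable open cover, which is exactly Proposition~\ref{proposition-Loc-Lipschitz-v7:3} referred to in the introduction. So the key steps are: (1) Using that $\varphi$ is locally Lipschitz on $A$, choose for each $a\in A$ a radius $r_a>0$ and a constant so that $\varphi$ is Lipschitz on $A\cap\mathbf{O}(a,r_a)$; extend this to a Lipschitz function $\varphi_a$ on the ball $\mathbf{O}(a,r_a/2)$ in $X$ by Theorem~\ref{theorem-Loc-Lipschitz-v1:1}. (2) Let $U=\bigcup_{a\in A}\mathbf{O}(a,r_a/2)$, an open neighbourhood of $A$; the collection $\{\mathbf{O}(a,r_a/2):a\in A\}$ together with the open set $X\setminus A$ forms an open cover of $X$, but it is not countable, so I would instead work with the countable cover obtained by intersecting with the standard metric structure. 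Concretely, set $V_0=X\setminus A$ and, for $n\in\N$, let $V_n=\{x\in X:d(x,A)<1/n\}$, and refine: the real device is to cover $U$ by the balls $\mathbf{O}(a,r_a/2)$ and extract a \emph{locally finite} open refinement $\{W_i\}_{i\in I}$ of $\{\mathbf{O}(a,r_a/2)\}\cup\{X\setminus A\}$ together with a subordinated locally Lipschitz partition of unity $\{e_i\}$, guaranteed by the metric-space version of Proposition~\ref{proposition-Loc-Lipschitz-v7:3} (each $W_i$ is assigned either to some $a(i)$, or to $X\setminus A$). (3) On each $W_i$ with $W_i\subset\mathbf{O}(a(i),r_{a(i)}/2)$ we have the Lipschitz function $\varphi_{a(i)}$ available; on each $W_i\subset X\setminus A$ pick the Czipszer--Geh\'er extension $\Phi_+$ from Theorem~\ref{theorem-Loc-Lipschitz-v17:1}, which by Remark~\ref{remark-Loc-Lipschitz-v18:2} is locally Lipschitz there. (4) Define $\Phi=\sum_i e_i\,g_i$, where $g_i$ is the locally Lipschitz function attached to $W_i$; this sum is locally finite, hence locally Lipschitz, and agrees with $\varphi$ on $A$ because every $W_i$ meeting $A$ is inside some $\mathbf{O}(a(i),r_{a(i)}/2)$ on which $g_i$ restricts to $\varphi$ on $A$, and the $e_i$ sum to $1$.

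One point requiring care is that the local Lipschitz extensions $\varphi_{a(i)}$ on overlapping $W_i$'s need not agree, so the convex combination $\sum_i e_i g_i$ is the natural remedy — the only thing to check is that on a small ball around a point $p\in A$, \emph{all} the $g_i$ with $e_i\not\equiv 0$ there are uniformly Lipschitz with a common constant and agree with $\varphi$ on $A$, so that the combination is Lipschitz on that ball and extends $\varphi$; this is routine given local finiteness. The genuine obstacle, and the reason the introduction flags countable paracompactness and Mather's construction, is establishing Proposition~\ref{proposition-Loc-Lipschitz-v7:3}: one must produce a \emph{locally finite} partition of unity made of locally Lipschitz functions subordinated to a \emph{countable} open cover of a metric space, without invoking full paracompactness. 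The trick I would use is Mather's device: given a countable open cover $\{U_n\}_{n\in\N}$, form the distance functions $d(\cdot,X\setminus U_n)$, which are $1$-Lipschitz, take partial maxima $h_n=\max_{k\le n}d(\cdot,X\setminus U_k)$, and use $\max\{0,\,d(\cdot,X\setminus U_n)-\tfrac12 h_{n-1}\}$-type expressions (as in the refinement in~\cite{MR1476756}) to manufacture locally Lipschitz functions whose supports are locally finite and index-subordinated; normalising gives the partition of unity. Verifying local finiteness of these supports and their local Lipschitz character is the main work, but it is elementary once the formulas are set up. With Proposition~\ref{proposition-Loc-Lipschitz-v7:3} in hand, Theorem~\ref{theorem-Loc-Lipschitz-v5:1} follows by the gluing described above (after first reducing to the bounded-range case via $\arctan$ so that the pieces coming from $X\setminus A$ can be taken to be the explicit $\Phi_+$ of Theorem~\ref{theorem-Loc-Lipschitz-v17:1}).
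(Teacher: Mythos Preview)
Your overall strategy (reduce to bounded range via $\arctan$, glue local Lipschitz extensions with a locally Lipschitz partition of unity, then undo with $\tan$) matches the paper's, but there is a genuine gap at the crucial step. Proposition~\ref{proposition-Loc-Lipschitz-v7:3} is stated and proved only for \emph{countable} open covers, and your own sketch of the Mather construction at the end is explicitly for a countable family $\{U_n\}_{n\in\N}$. Yet the cover you feed into it, $\{\mathbf{O}(a,r_a/2):a\in A\}\cup\{X\setminus A\}$, is uncountable in general. You notice this (``but it is not countable'') and gesture at a fix via the sets $V_n=\{x:d(x,A)<1/n\}$, but these sets do not have the property that $\varphi$ is Lipschitz on $V_n\cap A$, so they cannot play the role of the pieces you need; and the sentence that follows simply reverts to the uncountable ball-cover and asserts that Proposition~\ref{proposition-Loc-Lipschitz-v7:3} applies. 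It does not. Passing from the uncountable cover to a workable countable one is exactly the content you are missing, and without it the argument either collapses or silently invokes full paracompactness of metric spaces (Stone's theorem), which is precisely what the paper set out to avoid (see Remark~\ref{remark-Loc-Lipschitz-v20:1}).

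The paper fills this gap with Proposition~\ref{proposition-Loc-Lipschitz-v5:2}: once $\varphi$ is bounded (which you have arranged), one can upgrade the local Lipschitz constants to ``global from a point'' constants $K_p=\max\{L_p,M/\delta_p\}$ as in Proposition~\ref{proposition-Loc-Lipschitz-v3:2}, and then the sets $U_n=\bigcup\{\mathbf{O}(p,\delta_p):K_p\le n\}$ form a \emph{countable} increasing open cover of $A$ on each member of which $\varphi$ is genuinely $n$-Lipschitz. Extending each $U_n$ to an open $O_n\subset X$ with $O_n\cap A=U_n$ now gives a countable open cover of $X$ to which Proposition~\ref{proposition-Loc-Lipschitz-v7:3} legitimately applies; each $\varphi\uhr U_n$ is extended to all of $X$ by McShane--Whitney, and the gluing $\Phi=\sum_n\xi_n\Phi_n$ goes through without any separate treatment of $X\setminus A$ or any appeal to the $\Phi_+$ of Theorem~\ref{theorem-Loc-Lipschitz-v17:1}. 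This countable-cover lemma is the missing idea in your plan.
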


As mentioned in the Introduction, a special case of Theorem
\ref{theorem-Loc-Lipschitz-v5:1} follows from a more general result,
see \cite[Theorem 5.12]{MR515647}, which is based on paracompactness
of metrizable spaces. Here, we use a similar approach but based only
on countable paracompactness. Recall that a space $X$ is
\emph{countably paracompact} if each countable open cover of $X$ has a
locally finite open refinement.  The fact that each metrizable
space is countably paracompact is a simple consequence of Dowker's
result \cite[Theorem 2]{dowker:51}, see also \cite[Theorem
1.1]{MR776636}. Here, we will use this implicitly based on the
following considerations.\medskip

The \emph{cozero set}, or the \emph{set-theoretic support}, of a
function $\xi : X\to \R$ is the set
$\coz(\xi) = \{x\in X : \xi(x)\neq 0\}$.  A collection
$\xi_n:X\to [0,1]$, $n\in \N$, of continuous functions on a space $X$
is a \emph{partition of unity} if $\sum_{n=1}^\infty\xi_n(x)=1$, for
each $x\in X$.  A partition of unity $\{\xi_n:n\in \N\}$ is
\emph{locally finite} if $\{\coz(\xi_n) : n\in \N\}$ is a locally
finite cover of $X$; and it is \emph{index-subordinated} to a cover
$\{O_n:n\in \N\}$ of $X$ if $\coz(\xi_n)\subset O_n$, for every
$n\in \N$. It was shown in \cite[Lemma]{MR814046} that each countable
open cover of a metric space admits a locally finite
index-subordinated partition of unity consisting of Lipschitz
functions. Here, we give a direct proof of the following special case
of this result. 

\begin{proposition}
  \label{proposition-Loc-Lipschitz-v7:3}
  Each open cover $\{O_n:n\in\N\}$ of a metric space $(X,d)$ admits a
  locally finite index-subordinated partition of unity consisting of
  locally Lipschitz functions.
\end{proposition}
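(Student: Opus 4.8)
The plan is to build the partition of unity directly from suitably damped distance functions. Assuming $X\neq\emptyset$ and using the convention $d(x,\emptyset)=+\infty$ (so that the case $O_n=X$ needs no separate treatment), I would set
\[
  g_n(x)=\min\bigl\{2^{-n},\,d(x,X\setminus O_n)\bigr\},\qquad x\in X,\ n\in\N.
\]
Each $g_n$ is $1$-Lipschitz, since the distance to a set is $1$-Lipschitz and truncation by a constant does not increase the Lipschitz constant; moreover $0\leq g_n\leq 2^{-n}$ and $\coz(g_n)=O_n$ because $O_n$ is open. The crucial device --- essentially Mather's trick --- is then to pass to
\[
  h_n(x)=\max\Bigl\{0,\ g_n(x)-2\sum_{k=1}^{n-1}g_k(x)\Bigr\},\qquad x\in X,
\]
truncating each $g_n$ against twice the sum of all earlier ones, and finally to normalise by setting $\xi_n=h_n/\sum_{k}h_k$.

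Two verifications are routine. First, $\coz(h_n)\subset\coz(g_n)=O_n$, because $g_n(x)=0$ forces $h_n(x)=0$; so $\{\xi_n\}$ will be index-subordinated. Second, $\sum_k h_k$ is strictly positive: given $x\in X$, let $N$ be the least index with $x\in O_N$, so that $g_k(x)=0$ for $k<N$; then $h_N(x)=g_N(x)>0$, hence $\sum_k h_k(x)>0$, and this also shows $\bigcup_n\coz(h_n)=X$.

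The heart of the matter is local finiteness of $\{\coz(h_n)\}$, and this is the step that must be engineered with care. Fix $x_0\in X$ and, using that $\{O_n\}$ covers $X$, pick $m$ with $c:=g_m(x_0)>0$; by continuity there is a neighbourhood $U$ of $x_0$ on which $g_m>c/2$. Then for every $n>m$ and every $y\in U$,
\[
  g_n(y)-2\sum_{k=1}^{n-1}g_k(y)\ \leq\ g_n(y)-2g_m(y)\ <\ 2^{-n}-c\ \leq\ 0
\]
as soon as $2^{-n}\leq c$, so $h_n\equiv 0$ on $U$ for all but finitely many $n$, i.e.\ only finitely many $\coz(h_n)$ meet $U$. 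I expect this to be the main obstacle: the argument goes through only because the $g_n$ are damped by the factors $2^{-n}$; without some such damping, $g_n$ could stay close to $1$ on $U$ while $\sum_{k<n}g_k$ is guaranteed only to be of size $\gtrsim c$, and the truncation would fail to annihilate $h_n$.

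It then remains to see that $\xi_n=h_n/\sum_k h_k$ is locally Lipschitz. Each $h_n$ is Lipschitz, being a finite linear combination of the $1$-Lipschitz functions $g_1,\dots,g_n$ followed by $\max\{0,\cdot\}$. Near any point only finitely many $h_k$ are nonzero on a neighbourhood, so there $\sum_k h_k$ coincides with a finite sum and is Lipschitz; being continuous and strictly positive, it is bounded below by a positive constant on a possibly smaller neighbourhood, whence $1/\sum_k h_k$ is locally Lipschitz, and since $h_n$ is bounded and locally Lipschitz, so is the product $\xi_n$. Thus $\{\xi_n\}$ is a locally finite, index-subordinated partition of unity consisting of locally Lipschitz functions. (Note that the $h_n$ are themselves Lipschitz, but the normalisation yields only local Lipschitzness, as $\sum_k h_k$ need not be bounded away from $0$ globally --- which is why the stronger statement that the $\xi_n$ can be taken Lipschitz, as in \cite{MR814046}, requires a more elaborate argument.)
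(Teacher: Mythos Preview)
Your proof is correct, and the overall architecture matches the paper's: start with the damped distance functions $g_n=\min\{2^{-n},d(\cdot,X\setminus O_n)\}$ (the paper's $\eta_n$), truncate to obtain a locally finite family, and normalise. The difference lies in the truncation step. You subtract twice the sum of the \emph{earlier} terms, taking $h_n=\max\bigl\{0,\,g_n-2\sum_{k<n}g_k\bigr\}$, while the paper subtracts half of a single global series $\eta=\sum_k 2^{-k}\eta_k$, setting $\gamma_n=\max\{0,\eta_n-\tfrac12\eta\}$ (this is Fathi's refinement of Mather's construction). Both rely on the $2^{-n}$ damping to force local finiteness. Your version makes positivity of $\sum_n h_n$ immediate --- at the least index $N$ with $x\in O_N$ one has $h_N(x)=g_N(x)>0$ --- whereas the paper must argue that $\sup_n\eta_n(p)$ is attained at some $m$ and then that $\eta_m(p)\geq\eta(p)$. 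On the other hand, the paper's truncated functions $\gamma_n$ have a uniform Lipschitz bound (only the $1$-Lipschitz $\eta$ is subtracted), while your $h_n$ has Lipschitz constant growing like $n$; this is immaterial here, since normalisation only delivers local Lipschitzness anyway, as you correctly note.
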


\begin{proof}
  If $X=O_n$ for some $n\in \N$, the property is trivial. Suppose that
  $O_n\neq X$ for every $n\in\N$.  Following the proof of
  \cite[Lemma]{MR814046}, for each $n\in\N$, consider the
  $1$-Lipschitz function $\eta_n:X\to [0,2^{-n}]$ defined by
  $\eta_n(x)=\min\{d(x,X\setminus O_n), 2^{-n}\}$, $x\in X$. Then
  $\coz(\eta_n)=O_n$, $n\in\N$, and the function
  $\eta=\sum_{n=1}^\infty\frac{\eta_n}{2^n}:X\to [0,1]$ remains
  $1$-Lipschitz. Moreover, $\eta$ is positive-valued because
  $\{\coz(\eta_n):n\in\N\}$ is a cover of $X$. We may now use a
  construction M. Mather, see \cite[Lemma]{MR0281155} and \cite[Lemma
  5.1.8]{engelking:89}, in fact its further refinement for countable
  covers in \cite{MR1476756}. Namely, for each $n\in\N$, define a
  function
  $\gamma_n(x)=\max\left\{\eta_n(x)-\frac12 \eta(x),0\right\}$,
  $x\in X$. Evidently, $0\leq\gamma_n\leq \eta_n$ and $\gamma_n$ is
  Lipschitz because so are $\eta_n$ and $\eta$. If $p\in X$, then
  $\eta(p)>2^{-k}$ for some $k\in \N$, hence $p$ is contained in an
  open set $V\subset X$ with $\eta(x)>2^{-k}$ for every $x\in
  X$. Accordingly, $\gamma_n(x)=0$ for every $x\in V$ and $n> k$, so
  $\{\coz(\gamma_n):n\in \N\}$ is locally finite. It is also a cover
  of $X$. Indeed, for $p\in X$, we have that
  $\sup_{n\in\N}\eta_n(p)\geq 2^{-n_0}>\eta_n(p)$ for some $n_0\in \N$
  and every $n> n_0$. Therefore, $\sup_{n\in\N}\eta_n(p)=\eta_m(p)$
  for some $m\leq n_0$. This implies that
  $\eta(p)\leq \sum_{n=1}^\infty
  \frac{\eta_m(p)}{2^n}=\eta_m(p)$. Accordingly,
  $\eta_m(p)>\frac12 \eta(p)$ and $\gamma_m(p)>0$. We may now define
  the required partition of unity by
  $\xi_k=\frac{\gamma_k}{\sum_{n=1}^\infty \gamma_n}$, $k\in\N$. It
  consists of locally Lipschitz functions because $\sum_{n=1}^\infty
  \gamma_n$ is a bounded function and locally it is a finite sum of
  Lipschitz functions. 
\end{proof}

Proposition \ref{proposition-Loc-Lipschitz-v7:3} is in good accord
with the following property of bounded locally Lipschitz functions. 

\begin{proposition}
  \label{proposition-Loc-Lipschitz-v5:2}
  If $(X,d)$ and $(Y,\rho)$ are metric spaces and $f:X\to Y$ is a
  bounded locally Lipschitz map, then $X$ has an open increasing cover
  $\{U_n:n\in\N\}$ such that $f\uhr U_n$ is Lipschitz, for each
  $n\in \N$.
\end{proposition}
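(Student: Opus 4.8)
The plan is to build $\{U_n\}$ directly from the metric and $f$, with no appeal to choice, by a ``ball-thickening'' of the sets on which $f$ is suitably Lipschitz. Fix $M\geq 0$ with $\rho(f(x),f(z))\leq M$ for all $x,z\in X$. For each $m\in\N$ put
\[
  V_m=\{x\in X: f\uhr \mathbf{O}(x,2/m)\ \text{is $m$-Lipschitz}\},\qquad
  W_m=\bigcup_{x\in V_m}\mathbf{O}(x,1/m),
\]
and set $U_n=\bigcup_{m\le n}W_m$. Then each $U_n$ is open, being a union of open balls, and $U_n\subseteq U_{n+1}$ by construction; so it only remains to verify that $\{U_n\}$ covers $X$ and that every $f\uhr U_n$ is Lipschitz.

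For the covering I would use local Lipschitzness of $f$: given $p\in X$, there are $r_p>0$ and $L_p\geq 0$ with $f\uhr \mathbf{O}(p,r_p)$ being $L_p$-Lipschitz; then for every $m\geq\max\{2/r_p,L_p\}$ we have $\mathbf{O}(p,2/m)\subseteq\mathbf{O}(p,r_p)$ and $L_p\leq m$, so $p\in V_m$, hence $p\in\mathbf{O}(p,1/m)\subseteq W_m$. Thus $p\in U_n$ for all sufficiently large $n$, and $\{U_n\}$ is an increasing open cover of $X$.

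For the Lipschitz estimate on a fixed $U_n$, take $p\neq q$ in $U_n$. If $d(p,q)\geq 1/n$, then $\rho(f(p),f(q))\leq M\leq Mn\,d(p,q)$ by boundedness of $f$. If $d(p,q)<1/n$, write $q\in W_k$ for some $k\leq n$, so $q\in\mathbf{O}(x,1/k)$ for some $x\in V_k$; the triangle inequality gives $d(x,p)\leq d(x,q)+d(q,p)<1/k+1/n\leq 2/k$, so both $p$ and $q$ lie in $\mathbf{O}(x,2/k)$, on which $f$ is $k$-Lipschitz, whence $\rho(f(p),f(q))\leq k\,d(p,q)\leq n\,d(p,q)$. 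In either case $f\uhr U_n$ is $(\max\{1,M\}\,n)$-Lipschitz.

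The step that needs care — and the reason for the two radii $1/m$ and $2/m$ — is the openness of the building blocks. The naive choice $U_n=\{x: f\uhr\mathbf{O}(x,1/n)\ \text{is $n$-Lipschitz}\}$ need not be open: a small perturbation of the centre $x$ enlarges the ball and can pull in a pair of points with a large difference quotient that lay just outside $\mathbf{O}(x,1/n)$ (such a configuration can be arranged even for bounded, locally Lipschitz $f$). Thickening each $V_m$ by balls of the strictly smaller radius $1/m$, while requiring $m$-Lipschitzness on the larger radius $2/m$, is exactly what lets the triangle inequality absorb the perturbation in the estimate above; this is the only subtlety, and everything else is elementary and choice-free, which is why this proposition fits naturally beside Proposition~\ref{proposition-Loc-Lipschitz-v7:3}.
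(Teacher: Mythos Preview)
Your argument is correct. The strategy is the same as the paper's --- take $U_n$ to be a union of balls around points where $f$ is well controlled, and split the Lipschitz estimate on $U_n$ into a ``far'' case (handled by the bound $M$) and a ``near'' case (handled by the local Lipschitz data) --- but your execution differs in two useful ways. First, by working with the intrinsically defined sets $V_m=\{x:f\uhr\mathbf{O}(x,2/m)\text{ is $m$-Lipschitz}\}$ at fixed scales, you avoid the appeal to choice that the paper makes when it selects, for each $p$, a pair $(\delta_p,L_p)$ and then sets $U_n=\bigcup\{\mathbf{O}(p,\delta_p):K_p\le n\}$ with $K_p=\max\{L_p,M/\delta_p\}$. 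Second, your two-radii device is not a mere nicety: the paper's key inequality \eqref{eq:Loc-Lipschitz-v6:1}, which asserts $\rho(f(x),f(z))\le K_p\,d(x,z)$ for every $x\in X$ and $z\in\mathbf{O}(p,\delta_p)$, can fail when $z$ is near the boundary of the ball and $x$ lies just outside it (e.g.\ take $f\equiv 0$ on $(-1,1)$, very steep on $[1,1+\eta]$, and $p=0$, $\delta_p=1$). Halving the radius in the paper's definition of $U_n$ would repair this, and that is precisely what your $1/m$-thickening of a set defined via $2/m$-balls encodes.
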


\begin{proof}
  Let $M\geq 0$ be such that $\rho(f(x),f(z))\leq M$, for every
  $x,z\in X$.  Also, for $p\in X$, let $\delta_p>0$ and $L_p\geq 0$ be
  such that $\rho(f(x),f(z))\leq L_p\, d(x,z)$, for every
  $x,z\in \mathbf{O}(p,\delta_p)$. As in Proposition
  \ref{proposition-Loc-Lipschitz-v3:2}, setting
  $K_p=\max\left\{L_p,\frac M{\delta_p}\right\}$, it follows that
  \begin{equation}
    \label{eq:Loc-Lipschitz-v6:1}
    \rho(f(x),f(z))\leq K_p\, d(x,z),\quad \text{for every $x\in X$
      and $z\in \mathbf{O}(p,\delta_p)$.}
  \end{equation}
  Finally, set
  $U_n=\bigcup\{\mathbf{O}(p,\delta_p): K_p\leq n\}$,
  $n\in\N$. Evidently, $\{U_n:n\in\N\}$ is an increasing open cover of
  $X$. Moreover, if $x,z\in U_n$, then $x\in \mathbf{O}(p,\delta_p)$
  and $z\in \mathbf{O}(q,\delta_q)$ for some $p,q\in X$ with
  $\max\{K_p,K_q\}\leq n$. According to
  \eqref{eq:Loc-Lipschitz-v6:1}, this implies that
  $\rho(f(x),f(z))\leq n d(x,z)$.
\end{proof}
  
Based on this, we have the following extension result.

\begin{proposition}
  \label{proposition-Loc-Lipschitz-v6:1}
  Let $(X,d)$ be a metric space and $A\subset X$ be a closed
  subset. Then each bounded locally Lipschitz function
  $\varphi:A\to \R$ can be extended to a locally Lipschitz function
  $\Phi:X\to \R$.
\end{proposition}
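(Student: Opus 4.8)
The plan is to glue together countably many \emph{global} Lipschitz extensions of pieces of $\varphi$ by means of a locally finite, locally Lipschitz partition of unity on $X$: local finiteness will force the glued function to be locally Lipschitz, while index-subordination will force it to equal $\varphi$ on $A$. First, viewing $A$ as a metric space with the metric inherited from $(X,d)$, the bounded locally Lipschitz map $\varphi:A\to\R$ satisfies the hypotheses of Proposition \ref{proposition-Loc-Lipschitz-v5:2}, which yields an increasing (relatively) open cover $\{U_n:n\in\N\}$ of $A$ with each $\varphi\uhr U_n$ Lipschitz. The sets $U_n$ need not be closed in $X$, but Theorem \ref{theorem-Loc-Lipschitz-v15:1} applies to an arbitrary subset of $X$; so for every $n$ it provides a Lipschitz function $\psi_n:X\to\R$ with $\psi_n\uhr U_n=\varphi\uhr U_n$ (and should one also want $\Phi$ bounded, one may first truncate $\psi_n$ to the range of $\varphi$ without changing it on $U_n$ or losing the Lipschitz property).

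Next I would lift the cover to $X$. Writing $U_n=A\cap W_n$ with $W_n\subset X$ open, put $O_n=W_n\cup(X\setminus A)$; then $\{O_n:n\in\N\}$ is an open cover of $X$ with the crucial feature that $O_n\cap A=U_n$ for every $n$. Applying Proposition \ref{proposition-Loc-Lipschitz-v7:3} to $\{O_n:n\in\N\}$ produces a locally finite index-subordinated partition of unity $\{\xi_n:n\in\N\}$ consisting of locally Lipschitz functions, so $\coz(\xi_n)\subset O_n$ for all $n$ and $\sum_{n=1}^\infty\xi_n\equiv 1$.

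Finally, set $\Phi=\sum_{n=1}^\infty\xi_n\psi_n$. Around any point of $X$ there is a ball meeting only finitely many of the sets $\coz(\xi_n)$ and on which each relevant $\xi_n$ is Lipschitz; there $\Phi$ coincides with a finite sum of products $\xi_n\psi_n$ of functions that are bounded and Lipschitz on that ball, hence $\Phi$ is locally Lipschitz on $X$. Moreover, if $p\in A$ and $\xi_n(p)\neq 0$, then $p\in\coz(\xi_n)\cap A\subset O_n\cap A=U_n$, whence $\psi_n(p)=\varphi(p)$; therefore $\Phi(p)=\sum_n\xi_n(p)\varphi(p)=\varphi(p)$, so $\Phi\uhr A=\varphi$ and $\Phi$ is the required extension.

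The one point requiring care is coherence near $A$: a priori a point $p\in A$ might be approached by pieces $\psi_n$ whose Lipschitz constants grow without bound, which would spoil local Lipschitzness of the sum. This is precisely what local finiteness of $\{\xi_n:n\in\N\}$ excludes — near $p$ only finitely many $\xi_n$, hence only finitely many of the globally Lipschitz functions $\psi_n$, are in play — and it is why the countable-paracompactness content of Proposition \ref{proposition-Loc-Lipschitz-v7:3} is exactly the tool the argument needs.
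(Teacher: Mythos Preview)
Your proof is correct and follows essentially the same route as the paper's: invoke Proposition~\ref{proposition-Loc-Lipschitz-v5:2} to get the countable cover $\{U_n\}$ of $A$ on which $\varphi$ is Lipschitz, extend each piece by McShane--Whitney (the paper cites Theorem~\ref{theorem-Loc-Lipschitz-v1:1}, you cite the equivalent Theorem~\ref{theorem-Loc-Lipschitz-v15:1}), lift the cover to $X$ so that $O_n\cap A=U_n$, apply Proposition~\ref{proposition-Loc-Lipschitz-v7:3} for a locally finite locally Lipschitz partition of unity, and glue. Your explicit formula $O_n=W_n\cup(X\setminus A)$ for lifting the cover and your closing remark on why local finiteness is the crux are both welcome clarifications, but the argument is the paper's own.
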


\begin{proof}
  By Proposition \ref{proposition-Loc-Lipschitz-v5:2}, $A$ has an open
  cover $\{U_n:n\in\N\}$ such that each restriction
  $\varphi_n=\varphi\uhr U_n$, $n\in\N$, is Lipschitz. Since $A$ is
  closed, this cover can be extended to an open cover $\{O_n:n\in\N\}$
  of $X$, i.e.\ with the property that $O_n\cap A=U_n$, $n\in\N$. Then
  by Proposition \ref{proposition-Loc-Lipschitz-v7:3},
  $\{O_n:n\in\N\}$ has an index-subordinated locally finite partition
  of unity $\{\xi_n:n\in\N\}$ consisting of locally Lipschitz
  functions.  Moreover, by Theorem \ref{theorem-Loc-Lipschitz-v1:1},
  each $\varphi_n:U_n\to \R$ can be extended to a Lipschitz function
  $\Phi_n:X\to \R$. We can now define a function $\Phi:X\to \R$ by
  $\Phi(x)=\sum_{n=1}^\infty \xi_n(x)\cdot \Phi_n(x)$, $x\in X$. Then
  $\Phi$ remains locally Lipschitz because each $p\in X$ has a
  neighbourhood $V\subset X$ such that
  $\{n\in \N: \coz(\xi_n)\cap V\neq \emptyset\}$ is finite, therefore
  $\Phi\uhr V$ is a finite sum of locally Lipschitz
  functions. Moreover, $\Phi$ is an extension of $\varphi$. Indeed,
  take a point $a\in A$ and set $\sigma_a=\{n\in\N:
  \xi_n(a)>0\}$. Then $\Phi_n(a)=\varphi_n(a)=\varphi(a)$ for every
  $n\in\sigma_a$, and therefore
  \[
    \Phi(a)=\sum_{n\in\sigma_a} \xi_n(a)\cdot \Phi_n(a)
    =\varphi(a)\cdot \sum_{n\in\sigma_a} \xi_n(a)= \varphi(a)\cdot
    1=\varphi(a).\qedhere
  \]
\end{proof}
  
Finally, we also have the
following refinement of Proposition
\ref{proposition-Loc-Lipschitz-v6:1}.

\begin{corollary}
  \label{corollary-Loc-Lipschitz-v9:1}
  If $(X,d)$ is a metric space, $A\subset X$ is a closed set and
  $M>0$, then each locally Lipschitz function $\varphi:A\to (-M,M)$
  can be extended to a locally Lipschitz function $\Phi:X\to (-M,M)$.
\end{corollary}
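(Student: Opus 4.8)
The plan is to deduce Corollary \ref{corollary-Loc-Lipschitz-v9:1} from Proposition \ref{proposition-Loc-Lipschitz-v6:1} by a short clamping-and-scaling argument, exploiting the fact that a function with values in the bounded interval $(-M,M)$ is in particular bounded, so no new extension machinery is needed.

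First, regarding $\varphi:A\to(-M,M)$ as a bounded locally Lipschitz function $A\to\R$, Proposition \ref{proposition-Loc-Lipschitz-v6:1} supplies a locally Lipschitz extension $\Phi_0:X\to\R$. This $\Phi_0$ need not be bounded, so I would next compose it with the $1$-Lipschitz retraction $\tau:\R\to[-M,M]$ given by $\tau(t)=\max\{-M,\min\{M,t\}\}$, setting $\Phi_1=\tau\circ\Phi_0:X\to[-M,M]$. Since the composition of a locally Lipschitz map with a globally Lipschitz one is again locally Lipschitz, $\Phi_1$ is locally Lipschitz; and it still extends $\varphi$, because $|\varphi(a)|<M$ for every $a\in A$ means $\tau$ leaves the values of $\varphi$ unchanged. (Alternatively, one could simply replace each Lipschitz piece $\Phi_n$ in the proof of Proposition \ref{proposition-Loc-Lipschitz-v6:1} by its $\tau$-clamp, so that $\Phi_0$ itself already lands in $[-M,M]$.)

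It remains to move the values of $\Phi_1$ strictly inside $(-M,M)$ at the points of $X\setminus A$ without disturbing $\Phi_1\uhr A=\varphi$. To this end I would take the cutoff $c:X\to(0,1]$ defined by $c(p)=\frac{1}{1+d(p,A)}$; it is $1$-Lipschitz, being the composition of the $1$-Lipschitz distance function $d(\cdot,A)$ with the $1$-Lipschitz map $t\mapsto(1+t)^{-1}$ on $[0,\infty)$. Moreover $c\uhr A\equiv 1$ because $d(a,A)=0$ for $a\in A$, while $c(p)<1$ for every $p\notin A$ because $A$ is closed and hence $d(p,A)>0$ there. Then the required extension is $\Phi=c\cdot\Phi_1$. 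It is locally Lipschitz, because on each sufficiently small ball it is a product of two bounded Lipschitz functions; it extends $\varphi$, since $\Phi\uhr A=1\cdot(\Phi_1\uhr A)=\varphi$; and it takes values in $(-M,M)$, since $|\Phi(p)|=|\varphi(p)|<M$ for $p\in A$, whereas $|\Phi(p)|=c(p)\,|\Phi_1(p)|<1\cdot M=M$ for $p\notin A$, using $c(p)<1$ and $|\Phi_1(p)|\leq M$.

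There is no genuine obstacle in this argument; the only point demanding a little care is the choice of $c$, which must be exactly $1$ on $A$ (so that multiplication does not alter $\varphi$) yet strictly below $1$ on $X\setminus A$ (so that multiplication pushes the $[-M,M]$-valued function $\Phi_1$ into the open interval precisely where it might attain $\pm M$). I would also note that this Corollary is what yields Theorem \ref{theorem-Loc-Lipschitz-v5:1}: composing a locally Lipschitz $\varphi:A\to\R$ with $\arctan$ gives a locally Lipschitz map into $\left(-\frac{\pi}{2},\frac{\pi}{2}\right)$, the Corollary extends it, and composing the extension with the locally Lipschitz function $\tan$ produces the desired $\R$-valued locally Lipschitz extension.
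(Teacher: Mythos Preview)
Your argument is correct and takes essentially the same route as the paper: extend $\varphi$ to a locally Lipschitz $\Psi:X\to\R$ via Proposition~\ref{proposition-Loc-Lipschitz-v6:1}, then multiply by a locally Lipschitz cutoff equal to $1$ on $A$ so that the product lands in $(-M,M)$. The only difference is in how the cutoff is arranged: the paper sets $U=\Psi^{-1}\big((-M,M)\big)$ and uses $\eta=\dfrac{d(\cdot,X\setminus U)}{d(\cdot,A)+d(\cdot,X\setminus U)}$ (so that $\eta\cdot\Psi$ vanishes outside $U$ and has modulus below $M$ inside), whereas you first clamp $\Psi$ into $[-M,M]$ and then scale by the simpler factor $(1+d(\cdot,A))^{-1}$. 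Your variant trades the extra clamping step for a cutoff that does not depend on $\Psi$ and avoids the case distinction $U=X$ versus $U\neq X$; otherwise the two proofs are interchangeable.
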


\begin{proof}
  If $\varphi:A\to (-M,M)$ is locally Lipschitz, then by Proposition
  \ref{proposition-Loc-Lipschitz-v6:1}, it can be extended to a
  locally Lipschitz function $\Psi:X\to \R$. Let
  $U=\Psi^{-1}\big((-M,M)\big)$, which is an open set containing
  $A$. Hence, there is a locally Lipschitz function
  $\eta:X\to [0,1]$ with $\eta^{-1}(0)=X\setminus U$ and
  $\eta^{-1}(1)=A$. For instance, $\eta\equiv 1$ is the constant
  $1$ if $U=X$; otherwise we can take
  $\eta=\frac{d(\cdot,X\setminus U)}{d(\cdot,A)+d(\cdot,X\setminus
    U)}$ because the distance functions $d(\cdot,X\setminus U)$ and
  $d(\cdot,A)$ are Lipschitz. Then the product function
  $\Phi=\eta\cdot\Psi:X\to (-M,M)$ is as required.
\end{proof}

\begin{proof}[Proof of Theorem \ref{theorem-Loc-Lipschitz-v5:1}]
  We can proceed as in Remark \ref{remark-Loc-Lipschitz-v23:1}.
  Briefly, for a locally Lipschitz function $\varphi:A\to \R$, the
  function
  $ \tilde{\varphi}=\arctan\circ \varphi:A\to
  \left(-\frac\pi2,\frac\pi2\right)$ is also locally Lipschitz.
  Hence, by Corollary \ref{corollary-Loc-Lipschitz-v9:1}, it can be
  extended to a locally Lipschitz function
  ${\tilde{\Phi}:X\to \left(-\frac\pi2,\frac\pi2\right)}$. Finally,
  since $\tan:\left(-\frac\pi2,\frac\pi2\right)$ is locally Lipschitz,
  the function 
  $\Phi=\tan\circ \tilde{\Phi}:X\to \R$ is as required.
\end{proof}

We conclude this section with the following two remarks. 

\begin{remark}
  \label{remark-Loc-Lipschitz-v20:1}
  The construction in Proposition \ref{proposition-Loc-Lipschitz-v6:1}
  is standard and can be applied to give a direct proof of Theorem
  \ref{theorem-Loc-Lipschitz-v5:1}. To this end, let us recall that an
  arbitrary collection $\xi_\alpha:X\to [0,1]$,
  $\alpha\in \mathscr{A}$, of continuous functions on a space $X$ is a
  \emph{partition of unity} if
  $\sum_{\alpha\in \mathscr{A}}\xi_\alpha(x)=1$, for each $x\in X$.
  Here, ``$\sum_{\alpha\in \mathscr{A}}\xi_\alpha(x)=1$'' means that
  only countably many functions $\xi_\alpha$'s do not vanish at $x$,
  and the series composed by them is convergent to 1. Now, suppose
  that $(X,d)$, $A\subset X$ and $\varphi:A\to \R$ are as in Theorem
  \ref{theorem-Loc-Lipschitz-v5:1}. Then by definition, each point
  $a\in A$ has a neighbourhood $U_a\subset A$ such that the
  restriction $\varphi_a=\varphi\uhr U_a$ is Lipschitz. Since $A$ is
  closed, one can take an open cover $\{O_a:a\in A\}$ of $X$ such that
  $O_a\cap A=U_a$, $a\in A$. Also, using Theorem
  \ref{theorem-Loc-Lipschitz-v1:1}, one can extend each
  $\varphi_a:U_a\to \R$ to a Lipschitz function $\Phi_a:X\to
  \R$. Thus, the proof is reduced to the existence of a locally finite
  partition of unity $\{\xi_a:a\in A\}$ which is index-subordinated to
  $\{O_a:a\in A\}$ and consists of locally Lipschitz functions.  The
  construction of such a partition of unity is not a simple fact and
  will require the use of A. H. Stone's theorem \cite[Corollary
  1]{stone:48} that each metrizable space is paracompact, also the
  Lefschetz lemma \cite{MR0007093} that each point-finite open cover
  of a normal space has a closed shrinking. However, both these
  results rely heavily on the axiom of choice.  That is, this approach
  will make the proof unnecessarily complicated.\qed
\end{remark}

\begin{remark}
  \label{remark-Loc-Lipschitz-v20:2}
  It was shown by Dowker \cite[Theorem 4]{dowker:51} that a space $X$
  is countably paracompact and normal if and only if for each pair of
  functions $\varphi,\psi:X\to \R$ such that $\varphi$ is upper
  semi-continuous, $\psi$ is lower semi-continuous and $\varphi<\psi$,
  there exists a continuous function $f:X\to \R$ with
  $\varphi <f<\psi$. For a metric space $(X,d)$, this result was
  partially refined in \cite[Theorem 5.4]{MR515647} by showing that
  $f$ can be constructed to be locally Lipschitz. The argument
  suggested in \cite{MR515647} is to follow an alternative proof of
  this theorem given in \cite[4.3, p. 171]{dugundji:66} which is based
  on paracompactness and partitions of unity, but now to take a
  locally Lipschitz partition of unity. In this regard, let us
  explicitly remark that the locally Lipschitz partitions of unity
  considered in \cite{MR515647} are based on paracompactness of
  metrizable spaces, see Remark
  \ref{remark-Loc-Lipschitz-v20:1}. Using Proposition
  \ref{proposition-Loc-Lipschitz-v7:3}, one can get a simple direct
  proof of this fact. Briefly, as in \cite{dugundji:66}, take an open
  cover $\{O_r: r\in\Q\}$ of $X$ such that $\varphi(x)<r<\psi(x)$, for
  every $x\in O_r$ and $r\in\Q$. Since the rational numbers $\Q$ are
  countable, by Proposition \ref{proposition-Loc-Lipschitz-v7:3},
  $\{O_r: r\in\Q\}$ has an index-subordinated locally finite partition
  of unity $\{\xi_r:r\in\Q\}$ consisting of locally Lipschitz
  functions. The required locally Lipschitz function $f:X\to \R$ is
  now defined by $f(x)=\sum_{r\in\Q}\xi_r(x)\cdot r$, $x\in X$, see
  the proof of Proposition \ref{proposition-Loc-Lipschitz-v6:1}. \qed
\end{remark}

\section{Pointwise and Uniform Approximations}

As shown in Remark \ref{remark-Loc-Lipschitz-v16:3}, the approximation
property in Theorem \ref{theorem-Loc-Lipschitz-v9:1} is not valid for
unbounded functions. For such functions, using the method developed in
the previous section, we have the following refined property.

\begin{theorem}
  \label{theorem-Loc-Lipschitz-v10:1}
  If $(X,d)$ is a metric space, then each lower semi-continuous
  function $\varphi:X\to \R$ is a pointwise limit of some increasing
  sequence of locally Lipschitz functions.
\end{theorem}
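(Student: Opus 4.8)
The plan is to localise the Pasch-Hausdorff construction of Theorem~\ref{theorem-Loc-Lipschitz-v9:1} by means of a locally Lipschitz partition of unity furnished by Proposition~\ref{proposition-Loc-Lipschitz-v7:3}. The reason Theorem~\ref{theorem-Loc-Lipschitz-v9:1} cannot be applied directly is exactly the phenomenon recorded in Remark~\ref{remark-Loc-Lipschitz-v16:3}: the envelope $\inf_{x\in X}[\varphi(x)+\kappa d(x,p)]$ may equal $-\infty$ when $\varphi$ is unbounded from below, so it must be applied to truncations of $\varphi$ and the resulting pieces glued back together.

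First, set $U_n=\{x\in X:\varphi(x)>-n\}$ for $n\in\N$. Since $\varphi$ is lower semi-continuous and real-valued, $\{U_n:n\in\N\}$ is an increasing open cover of $X$, so by Proposition~\ref{proposition-Loc-Lipschitz-v7:3} it admits a locally finite index-subordinated partition of unity $\{\xi_n:n\in\N\}$ consisting of locally Lipschitz functions, with $\coz(\xi_n)\subset U_n$ for every $n\in\N$. Next, for each $n$ put $\varphi_n=\max\{\varphi,-n\}$. Then $\varphi_n:X\to\R$ is lower semi-continuous, being a maximum of two lower semi-continuous functions, it is bounded from below by $-n$, and, crucially, $\varphi_n=\varphi$ on $U_n$. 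Applying Theorem~\ref{theorem-Loc-Lipschitz-v9:1} to $\varphi_n$, the functions $f^{(n)}_m(p)=\inf_{x\in X}[\varphi_n(x)+m\,d(x,p)]$, $m\in\N$, are real-valued and $m$-Lipschitz, the sequence $\{f^{(n)}_m:m\in\N\}$ is increasing, and $f^{(n)}_m\to\varphi_n$ pointwise.

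Finally, define $g_m=\sum_{n=1}^\infty\xi_n\cdot f^{(n)}_m$ for $m\in\N$. Since $\{\coz(\xi_n):n\in\N\}$ is locally finite, each point of $X$ has a bounded open neighbourhood on which $g_m$ is a finite sum of products of two bounded Lipschitz functions, so $g_m$ is locally Lipschitz. As $n$ is fixed inside $f^{(n)}_m$, each sequence $\{f^{(n)}_m:m\in\N\}$ is increasing and $\xi_n\ge 0$, hence $\{g_m:m\in\N\}$ is increasing. For the pointwise convergence, fix $p\in X$ and set $\sigma_p=\{n\in\N:\xi_n(p)>0\}$, which is finite; for $n\in\sigma_p$ we have $p\in U_n$, so $\varphi_n(p)=\varphi(p)$ and $f^{(n)}_m(p)\to\varphi(p)$, and therefore $g_m(p)=\sum_{n\in\sigma_p}\xi_n(p)\,f^{(n)}_m(p)\to\varphi(p)\sum_{n\in\sigma_p}\xi_n(p)=\varphi(p)$. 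The step most likely to need care is verifying that each $g_m$ is locally Lipschitz: the functions $f^{(n)}_m$ are only globally Lipschitz and possibly unbounded, so one has to argue on bounded neighbourhoods; one must also keep track of the fact that $\varphi_n$ agrees with $\varphi$ precisely on $U_n\supset\coz(\xi_n)$, since this is what forces the limit to equal $\varphi$ itself rather than one of its truncations.
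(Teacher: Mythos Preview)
Your proof is correct and follows essentially the same route as the paper: the paper uses the identical open cover $O_k=\{x:\varphi(x)>-k\}$, the same partition of unity from Proposition~\ref{proposition-Loc-Lipschitz-v7:3}, the same truncations (written there piecewise but equal to your $\max\{\varphi,-k\}$), the same application of Theorem~\ref{theorem-Loc-Lipschitz-v9:1}, and the same gluing $f_n=\sum_k\xi_k f^k_n$. Your extra remark about passing to bounded neighbourhoods so that the Lipschitz factors $f^{(n)}_m$ become bounded is a welcome clarification of a point the paper handles by reference to Proposition~\ref{proposition-Loc-Lipschitz-v6:1}.
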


\begin{proof}
  Let $\varphi:X\to \R$ be lower semi-continuous. Then for each
  $k\in\N$, the set $O_k=\{x\in X:\varphi(x)>-k\}$ is open. Moreover,
  $X=\bigcup_{k=1}^\infty O_k$ and by Proposition
  \ref{proposition-Loc-Lipschitz-v7:3}, there exists a locally finite
  partition of unity $\{\xi_k:k\in\N\}$ of locally Lipschitz functions
  such that $\coz(\xi_k)\subset O_k$, $k\in\N$. On the other hand, we
  may define a sequence $\varphi_k:X\to \R$, $k\in\N$, of (bounded
  from below) lower semi-continuous functions by
  \begin{equation}
    \label{eq:Loc-Lipschitz-v10:1}
    \varphi_k(x)=
    \begin{cases}
      \varphi(x) &\text{if $x\in O_k$,}\\
      -k &\text{otherwise.}
    \end{cases}
  \end{equation}
  Then by Theorem \ref{theorem-Loc-Lipschitz-v9:1}, each $\varphi_k$
  is the pointwise limit of some increasing sequence $f^k_n:X\to \R$,
  $n\in\N$, of Lipschitz functions. We may now define the required
  sequence of functions $f_n:X\to \R$, $n\in\N$, by
  $f_n(x)=\sum_{k=1}^\infty \xi_k(x)\cdot f^k_n(x)$, $x\in X$.
  Indeed, as in the proof of Proposition
  \ref{proposition-Loc-Lipschitz-v6:1}, each $f_n$ is locally
  Lipschitz. It is also evident that $f_n\leq f_{n+1}$ because
  $f^k_n\leq f^k_{n+1}$, for every $k\in \N$. Finally, take a point
  $p\in X$ and set $\sigma_p=\{k\in\N:\xi_k(p)>0\}$. Then
  $f_n(p)=\sum_{k\in\sigma_p}\xi_k(p)\cdot f_n^k(p)$ and by
  \eqref{eq:Loc-Lipschitz-v10:1},
  $\lim_{n\to \infty}f^k_n(p)=\varphi(p)$ for every $k\in
  \sigma_p$. Since $\sigma_p$ is a finite set, this implies that
  $\lim_{n\to\infty}f_n(p)=\sum_{k\in\sigma_p} \xi_k(p)\cdot
  \varphi(p)= \varphi(p)\cdot 1=\varphi(p)$.
\end{proof}

In case the function $\varphi:X\to \R$ in Theorem
\ref{theorem-Loc-Lipschitz-v10:1} is continuous (i.e.\ both lower and
upper semi-continuous), it can be uniformly approximated by locally
Lipschitz functions. The following result is an alternative version of
\cite[Theorem 5.17]{MR515647} which doesn't make use of
paracompactness of metrizable spaces.

\begin{theorem}
  \label{theorem-Loc-Lipschitz-v10:4}
  Let $(X,d)$ be a metric space, $\varphi:X\to \R$ and
  $\varepsilon:X\to (0,+\infty)$ be continuous functions, and
  $A\subset X$ be a closed set. Then each locally Lipschitz function
  $g:A\to \R$ with $|g(a)-\varphi(a)|<\varepsilon(a)$, $a\in A$, can
  be extended to a locally Lipschitz function $f:X\to \R$ such that
  $|f(x)-\varphi(x)|<\varepsilon(x)$, for every $x\in X$.
\end{theorem}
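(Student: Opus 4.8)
The plan is to reduce the problem to two ingredients that are already available and then blend them: an unconstrained locally Lipschitz $\varepsilon$-approximation of $\varphi$ on all of $X$, and a locally Lipschitz extension of $g$ to $X$ supplied by Theorem~\ref{theorem-Loc-Lipschitz-v5:1}; these are glued by a locally Lipschitz cut-off supported near $A$.

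First I would produce a locally Lipschitz function $f_0:X\to\R$ with $|f_0-\varphi|<\varepsilon$ on all of $X$. Since $\varphi-\varepsilon$ is continuous (hence upper semi-continuous), $\varphi+\varepsilon$ is continuous (hence lower semi-continuous), and $\varphi-\varepsilon<\varphi+\varepsilon$ because $\varepsilon>0$, this is precisely the situation of Remark~\ref{remark-Loc-Lipschitz-v20:2}: choose an open cover $\{O_r:r\in\Q\}$ of $X$ with $\varphi(x)-\varepsilon(x)<r<\varphi(x)+\varepsilon(x)$ for every $x\in O_r$, apply Proposition~\ref{proposition-Loc-Lipschitz-v7:3} to this \emph{countable} cover to obtain an index-subordinated locally finite partition of unity $\{\xi_r:r\in\Q\}$ of locally Lipschitz functions, and set $f_0(x)=\sum_{r\in\Q}\xi_r(x)\cdot r$. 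As in the proof of Proposition~\ref{proposition-Loc-Lipschitz-v6:1}, $f_0$ is locally Lipschitz, being locally a finite sum of locally Lipschitz functions; and for each $x\in X$ the value $f_0(x)$ is a convex combination of rationals $r$ with $\coz(\xi_r)\subset O_r$, hence with $x\in O_r$, so $f_0(x)$ lies in the open interval $(\varphi(x)-\varepsilon(x),\varphi(x)+\varepsilon(x))$.

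Next I would extend $g$. Since $A$ is closed, Theorem~\ref{theorem-Loc-Lipschitz-v5:1} gives a locally Lipschitz extension $\tilde g:X\to\R$ of $g$. Set $U=\{x\in X:|\tilde g(x)-\varphi(x)|<\varepsilon(x)\}$. As $\tilde g$, $\varphi$ and $\varepsilon$ are continuous, $U$ is open, and $A\subset U$ because $\tilde g\uhr A=g$ and $|g-\varphi|<\varepsilon$ on $A$. If $U=X$ we are already done with $f=\tilde g$. Otherwise $A$ and the nonempty closed set $X\setminus U$ are disjoint, and exactly as in the proof of Corollary~\ref{corollary-Loc-Lipschitz-v9:1} the function $\eta=\dfrac{d(\cdot,X\setminus U)}{d(\cdot,A)+d(\cdot,X\setminus U)}:X\to[0,1]$ is locally Lipschitz (the distance functions are Lipschitz and the denominator is a positive continuous function, hence locally bounded away from $0$), with $\eta\uhr A\equiv 1$ and $\eta\uhr(X\setminus U)\equiv 0$. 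Finally I would define $f=\eta\cdot\tilde g+(1-\eta)\cdot f_0$. This is locally Lipschitz since locally Lipschitz functions are closed under sums and products (locally a finite computation with bounded Lipschitz functions). For $a\in A$ we have $\eta(a)=1$, so $f(a)=\tilde g(a)=g(a)$, i.e.\ $f$ extends $g$. For the bound, write $f(x)-\varphi(x)=\eta(x)\bigl(\tilde g(x)-\varphi(x)\bigr)+\bigl(1-\eta(x)\bigr)\bigl(f_0(x)-\varphi(x)\bigr)$: if $x\in U$ then both $|\tilde g(x)-\varphi(x)|$ and $|f_0(x)-\varphi(x)|$ are $<\varepsilon(x)$, so the convex combination gives $|f(x)-\varphi(x)|<\varepsilon(x)$ (strict, since at least one coefficient is positive and the corresponding term is strict); if $x\notin U$ then $\eta(x)=0$ and $f(x)=f_0(x)$, so again $|f(x)-\varphi(x)|<\varepsilon(x)$.

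The construction is largely routine; the one point worth flagging is that the approximation $f_0$ must itself be \emph{locally Lipschitz} (so one cannot simply interpolate between $\tilde g$ and $\varphi$), which is exactly what forces the use of the countable Pasch--Hausdorff/partition-of-unity construction indexed by $\Q$, where Proposition~\ref{proposition-Loc-Lipschitz-v7:3} suffices. The only steps requiring a little care are verifying that $\eta$, $f_0$, $\tilde g$ and their combination are locally Lipschitz, and that the approximation inequality survives the convex combination as a strict inequality.
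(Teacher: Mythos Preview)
Your proof is correct and follows essentially the same architecture as the paper's: extend $g$ to a locally Lipschitz $\tilde g$ via Theorem~\ref{theorem-Loc-Lipschitz-v5:1}, produce an unconstrained locally Lipschitz $\varepsilon$-approximation of $\varphi$ on all of $X$, and blend the two with the locally Lipschitz cut-off $\eta$ from Corollary~\ref{corollary-Loc-Lipschitz-v9:1}. The only (minor) difference is how the global $\varepsilon$-approximation is obtained: the paper first proves a constant-$\varepsilon$ approximation (Proposition~\ref{proposition-Loc-Lipschitz-v21:1}) and then bootstraps to variable $\varepsilon$ via another partition of unity (Corollary~\ref{corollary-Loc-Lipschitz-v21:1}), whereas you go there in one step using the Dowker-type insertion of Remark~\ref{remark-Loc-Lipschitz-v20:2} with the $\Q$-indexed cover; both routes rest on Proposition~\ref{proposition-Loc-Lipschitz-v7:3} and are equivalent in spirit.
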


Regarding the proper place of Theorem
\ref{theorem-Loc-Lipschitz-v10:4}, let us remark that the range in
\cite[Theorem 5.17]{MR515647} is an $n$-dimensional locally Lipschitz
manifold, while the locally Lipschitz extension is controlled by an
$\varepsilon$-homotopy, hence representing an
$\varepsilon$-approximation as well. For the real line, one can simply
take a linear homotopy $H:X\times [0,1]\to \R$ between $f$ and
$\varphi$, namely $H(x,t)=(1-t)\cdot f(x)+ t\cdot \varphi(x)$. Then
$|H(x,t)-\varphi(x)|<\varepsilon(x)$, for every $x\in X$ and
$t\in[0,1]$. In fact, Theorem \ref{theorem-Loc-Lipschitz-v10:4} is
also a natural generalisation of Theorem
\ref{theorem-Loc-Lipschitz-v5:1} because by Tietze's extension theorem
\cite{tietze:15}, each continuous function $g:A\to \R$, defined on a
closed subset $A$ of a metrizable space $X$, can be extended to a
continuous function $\varphi:X\to \R$. \medskip

A word should be also said about the role of the function
$\varepsilon:X\to (0,+\infty)$. For a metric space $(Y,\rho)$, a
natural generalisation of the uniform topology on the function space
$C(X,Y)$ of all continuous maps from $X$ to $Y$ is the \emph{fine
  topology}. It is generated by all sets
$\{g\in C(X,Y): \rho(g(x),f(x))<\varepsilon(x),\ x\in X\}$, where
$f\in C(X,Y)$ and $\varepsilon$ runs on $C(X,(0,+\infty))$.  This
topology was first introduced by Whitney \cite{whitney:36}, and still
sometimes carries his name. In contrast to the uniform topology, the
fine topology does not depend on the metric on $Y$ provided $X$ is
countably paracompact and normal \cite{dimaio-hola-holy-mccoy:98}; for
a paracompact $X$, this was previously obtained by Whitehead
\cite{MR124917} and Krikorian \cite{krikorian:69}. Thus, for a metric
space $(X,d)$, Theorem \ref{theorem-Loc-Lipschitz-v10:4} shows that
the locally Lipschitz members of $C(X,\R)$ are dense in $C(X,\R)$ with
respect not only to the uniform topology, but also with respect to the
fine topology.\medskip

The proof of Theorem \ref{theorem-Loc-Lipschitz-v10:4} is based on its
special case for the uniform topology. The fact that each continuous
function on a metric space can be uniformly approximated by locally
Lipschitz functions was obtained by several authors, see for instance
Miculescu \cite[Theorem 2]{MR1825525} and Garrido and Jaramillo
\cite[Corollary 2.8]{MR2037989}. Below, we present a simple and
self-contained proof of this property.

\begin{proposition}
  \label{proposition-Loc-Lipschitz-v21:1}
  Whenever $(X,d)$ is a metric space, each continuous function
  $\varphi:X\to \R$ is a uniform limit of locally Lipschitz functions.
\end{proposition}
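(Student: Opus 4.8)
The plan is to obtain the function by averaging constants against the locally Lipschitz partition of unity supplied by Proposition~\ref{proposition-Loc-Lipschitz-v7:3}, in the same spirit as the proof of Proposition~\ref{proposition-Loc-Lipschitz-v6:1}. It suffices to fix $\varepsilon>0$ and produce a locally Lipschitz function $f:X\to\R$ with $|f(x)-\varphi(x)|<\varepsilon$ for every $x\in X$. Since $\varphi$ is continuous, for each $n\in\mathbb Z$ the set $O_n=\varphi^{-1}\big((n\varepsilon-\varepsilon,\,n\varepsilon+\varepsilon)\big)$ is open, the family $\{O_n:n\in\mathbb Z\}$ covers $X$ (for $x\in X$ pick $n$ with $n\varepsilon\le\varphi(x)<(n+1)\varepsilon$), and $|\varphi(x)-n\varepsilon|<\varepsilon$ for every $x\in O_n$. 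After a bijective relabelling of the index set this countable cover satisfies the hypotheses of Proposition~\ref{proposition-Loc-Lipschitz-v7:3}, so there is a locally finite partition of unity $\{\xi_n:n\in\mathbb Z\}$ consisting of locally Lipschitz functions with $\coz(\xi_n)\subset O_n$, $n\in\mathbb Z$.

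Next I would define $f:X\to\R$ by $f(x)=\sum_{n\in\mathbb Z}\xi_n(x)\cdot n\varepsilon$. This is well defined since only finitely many $\xi_n$ are non-zero at any given point, and it is locally Lipschitz: every $p\in X$ has a neighbourhood $V$ with $\coz(\xi_n)\cap V=\emptyset$ for all but finitely many $n$, say for $n$ outside a finite set $F\subset\mathbb Z$; then $f\uhr V=\sum_{n\in F}n\varepsilon\,(\xi_n\uhr V)$ is a finite linear combination of locally Lipschitz functions, hence Lipschitz on a suitable smaller neighbourhood of $p$.

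It remains to check the uniform estimate, which follows at once from the choice of the sets $O_n$ together with $\sum_{n}\xi_n\equiv 1$. For $p\in X$ put $\sigma_p=\{n\in\mathbb Z:\xi_n(p)>0\}$; if $n\in\sigma_p$ then $p\in\coz(\xi_n)\subset O_n$, whence $|n\varepsilon-\varphi(p)|<\varepsilon$, and therefore
\[
  |f(p)-\varphi(p)|=\Bigl|\sum_{n\in\sigma_p}\xi_n(p)\,\big(n\varepsilon-\varphi(p)\big)\Bigr|
  \le\sum_{n\in\sigma_p}\xi_n(p)\,|n\varepsilon-\varphi(p)|
  <\varepsilon\sum_{n\in\sigma_p}\xi_n(p)=\varepsilon.
\]
I do not expect a genuine obstacle here: the only point requiring care is keeping the cover countable, which is why I slice $\R$ into the length-$2\varepsilon$ overlapping slabs $\big((n-1)\varepsilon,(n+1)\varepsilon\big)$ and pull them back, rather than taking an arbitrary open cover of $X$ on which $\varphi$ has small oscillation; and, as always with these sums, one invokes local finiteness to conclude that $f$ is locally Lipschitz.
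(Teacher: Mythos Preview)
Your proof is correct and follows essentially the same approach as the paper: pull back a countable family of short intervals under $\varphi$, apply Proposition~\ref{proposition-Loc-Lipschitz-v7:3} to obtain a locally Lipschitz partition of unity subordinate to this cover, and average the interval centres against it. The only cosmetic difference is that the paper uses the centres $r\in\Q$ with intervals $\mathbf{O}(r,\varepsilon)$, whereas you use the centres $n\varepsilon$, $n\in\mathbb{Z}$; the construction and the final estimate are otherwise identical.
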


\begin{proof}
  If $\varphi:X\to \R$ is continuous and $\varepsilon>0$, then
  $\left\{\varphi^{-1}\left(\mathbf{O}(r,\varepsilon)\right):
    r\in\Q\right\}$ is a countable open cover of $X$. Hence, by
  Proposition \ref{proposition-Loc-Lipschitz-v7:3}, it has a locally
  finite index-subordinated partition of unity $\{\xi_r:r\in\Q\}$
  consisting of locally Lipschitz functions. As in Remark
  \ref{remark-Loc-Lipschitz-v20:2}, we may define a locally Lipschitz
  function $f:X\to\R$ by $f(x)=\sum_{r\in\Q}\xi_r(x)\cdot r$,
  $x\in X$. If $x\in X$, then the set
  $\sigma_x=\{r\in \Q: \xi_r(x)\neq 0\}$ is finite. Since
  $\sum_{r\in\sigma_x}\xi_r(x)=1$, this implies that
  \begin{align*}
    |f(x)-\varphi(x)|&=
    \left|\sum_{r\in\sigma_x}\xi_r(x)\cdot
     r-\sum_{r\in\sigma_x}\xi_r(x)\cdot \varphi(x)\right|\\
    &\leq \sum_{r\in\sigma_x}\xi_r(x)\cdot|r-\varphi(x)|<
    \sum_{r\in\sigma_x}\xi_r(x)\cdot \varepsilon=\varepsilon.\qedhere 
  \end{align*}
\end{proof}

The property in Proposition \ref{proposition-Loc-Lipschitz-v21:1} can
be easily extended from a constant $\varepsilon>0$ to a (lower semi-)
continuous function $\varepsilon:X\to (0,+\infty)$.

\begin{corollary}
  \label{corollary-Loc-Lipschitz-v21:1}
  Let $(X,d)$ be a metric space and $\varphi:X\to \R$ be a continuous
  function. Then for every continuous function
  $\varepsilon:X\to (0,+\infty)$, there exists a locally Lipschitz
  function $f:X\to \R$ with $|f(x)-\varphi(x)|<\varepsilon(x)$, for
  every $x\in X$.
\end{corollary}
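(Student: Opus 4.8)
The plan is to reduce the claim to Proposition~\ref{proposition-Loc-Lipschitz-v21:1} by a standard rescaling trick that turns a positive continuous weight $\varepsilon:X\to(0,+\infty)$ into a uniform error bound. First I would observe that it suffices to produce a locally Lipschitz $f$ with $|f(x)-\varphi(x)|<\varepsilon(x)$ for all $x$; there is no closed-set constraint here, so the construction can be completely global.

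The key idea is to divide through by $\varepsilon$. Since $\varepsilon$ is continuous and strictly positive, the function $\frac1\varepsilon:X\to(0,+\infty)$ is continuous, and hence so is $\psi=\frac{\varphi}{\varepsilon}:X\to\R$. By Proposition~\ref{proposition-Loc-Lipschitz-v21:1} applied to $\psi$ with the constant error $1$, there is a locally Lipschitz function $g:X\to\R$ with $|g(x)-\psi(x)|<1$ for every $x\in X$. Now set $f=\varepsilon\cdot g:X\to\R$. Since $\varepsilon$ is continuous and strictly positive, it is locally Lipschitz (on a small ball around any point it is bounded away from $0$ and bounded above, but more directly: continuity alone does not give local Lipschitzness, so one must be slightly careful here --- see the obstacle below). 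Granting that $\varepsilon$ is locally Lipschitz, the product $f=\varepsilon\cdot g$ is locally Lipschitz as a local product of two locally Lipschitz functions, and
\[
  |f(x)-\varphi(x)| = \varepsilon(x)\cdot\left|g(x)-\frac{\varphi(x)}{\varepsilon(x)}\right| < \varepsilon(x)\cdot 1 = \varepsilon(x),\quad x\in X,
\]
which is exactly what is required.

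The main obstacle is precisely the step claiming that multiplying by $\varepsilon$ preserves local Lipschitzness: a merely continuous $\varepsilon$ need not be locally Lipschitz, so $f=\varepsilon\cdot g$ need not be locally Lipschitz even though $g$ is. The clean way around this is to first replace $\varepsilon$ by a locally Lipschitz lower bound. Concretely, by Proposition~\ref{proposition-Loc-Lipschitz-v21:1} (or its easy one-sided variant) one approximates the continuous function $\log\varepsilon$, or applies Proposition~\ref{proposition-Loc-Lipschitz-v7:3} directly to the cover $\{\varepsilon^{-1}((2^{-k-1},2^{-k+1})):k\in\Z\}$ of $X$ restricted to a countable subcover, to obtain a locally Lipschitz function $\tilde\varepsilon:X\to(0,+\infty)$ with $\tilde\varepsilon(x)\le\varepsilon(x)$ for all $x$; then run the argument above with $\tilde\varepsilon$ in place of $\varepsilon$, giving $f$ with $|f(x)-\varphi(x)|<\tilde\varepsilon(x)\le\varepsilon(x)$. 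An even simpler route, and the one I would actually write down, is to mimic the proof of Proposition~\ref{proposition-Loc-Lipschitz-v21:1} directly: cover $X$ by the countably many open sets $W_{r,k}=\varphi^{-1}(\mathbf{O}(r,2^{-k}))\cap\varepsilon^{-1}((2^{-k},+\infty))$ with $r\in\Q$, $k\in\N$, apply Proposition~\ref{proposition-Loc-Lipschitz-v7:3} to get a locally finite index-subordinated locally Lipschitz partition of unity $\{\xi_{r,k}\}$, and set $f(x)=\sum_{r,k}\xi_{r,k}(x)\cdot r$. Then at each $x$ the finitely many indices $(r,k)$ with $\xi_{r,k}(x)\ne0$ satisfy $|r-\varphi(x)|<2^{-k}<\varepsilon(x)$, so the convex-combination estimate gives $|f(x)-\varphi(x)|<\varepsilon(x)$ just as in Proposition~\ref{proposition-Loc-Lipschitz-v21:1}. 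This avoids the multiplication issue entirely and keeps the proof self-contained.
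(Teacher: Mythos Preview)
Your final route (the cover $W_{r,k}=\varphi^{-1}(\mathbf{O}(r,2^{-k}))\cap\varepsilon^{-1}((2^{-k},+\infty))$ with a locally Lipschitz partition of unity and $f=\sum_{r,k}\xi_{r,k}\cdot r$) is correct: the family is countable, it covers $X$, and the convex-combination estimate gives the strict bound $|f(x)-\varphi(x)|<\varepsilon(x)$ exactly as in Proposition~\ref{proposition-Loc-Lipschitz-v21:1}. You were also right to reject the bare rescaling $f=\varepsilon\cdot g$, since a merely continuous $\varepsilon$ need not be locally Lipschitz; the $\tilde\varepsilon$ workaround can be made to work too (e.g.\ via $\tilde\varepsilon=e^{h-\log 2}$ with $h$ a locally Lipschitz $\log 2$-approximation of $\log\varepsilon$), but it is more circuitous than your third option.

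The paper's proof is organised differently: it keeps Proposition~\ref{proposition-Loc-Lipschitz-v21:1} as a black box and layers a second partition of unity on top. Concretely, it covers $X$ by $O_n=\{\varepsilon>1/n\}$, takes a locally Lipschitz partition of unity $\{\xi_n\}$ subordinate to this cover, picks for each $n$ a locally Lipschitz $f_n$ with $|f_n-\varphi|<1/n$ from Proposition~\ref{proposition-Loc-Lipschitz-v21:1}, and sets $f=\sum_n\xi_n f_n$. So the paper separates the roles of $\varphi$ and $\varepsilon$ into two steps, whereas you fold them into a single refined cover. Your version is slightly more economical (one partition of unity instead of two nested ones) and avoids invoking Proposition~\ref{proposition-Loc-Lipschitz-v21:1} as a lemma; the paper's version is more modular and makes the reduction ``variable $\varepsilon$ $\Rightarrow$ constant $\varepsilon$'' explicit.
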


\begin{proof}
  Consider the open cover
  $O_n=\left\{x\in X: \varepsilon(x)>\frac 1n\right\}$, $n\in\N$, of
  $X$ and take a locally finite index-subordinated partition of unity
  $\{\xi_n:n\in\N\}$ consisting of locally Lipschitz functions. Also,
  by Proposition \ref{proposition-Loc-Lipschitz-v21:1}, for each
  $n\in\N$ take a locally Lipschitz function $f_n:X\to \R$ such that
  $|f_n(x)-\varphi(x)|<\frac1n$, $x\in X$. Then the function
  $f(x)=\sum_{n=1}^\infty \xi_n(x)\cdot f_n(x)$, $x\in X$, is as
  required because $x\in O_n$ implies that
  $|f_n(x)-\varphi(x)|<\varepsilon(x)$.
\end{proof}

\begin{proof}[Proof of Theorem \ref{theorem-Loc-Lipschitz-v10:4}]
  We can use the idea in the proof of Corollary
  \ref{corollary-Loc-Lipschitz-v9:1}. Namely, by Theorem
  \ref{theorem-Loc-Lipschitz-v5:1}, $g$ can be extended to a locally
  Lipschitz function $\Psi:X\to \R$.  Then
  $U=\{x\in X: |\Psi(x)-\varphi(x)|<\varepsilon(x)\}$ is an open set
  containing $A$. Hence, there is a locally Lipschitz function
  $\eta:X\to [0,1]$ such that $X\setminus U=\eta^{-1}(0)$ and
  $A=\eta^{-1}(1)$, see the proof of Corollary
  \ref{corollary-Loc-Lipschitz-v9:1}. Moreover, by Corollary
  \ref{corollary-Loc-Lipschitz-v21:1}, there is a locally Lipschitz
  function $\Phi:X\to \R$ such that
  $|\Phi(x)-\varphi(x)|<\varepsilon(x)$, for every $x\in X$. Finally,
  define $f:X\to \R$ by
  $f(x)=\eta(x)\cdot \Psi(x)+ (1-\eta(x))\cdot \Phi(x)$, $x\in
  X$. This $f$ is as required.
\end{proof}

We conclude this paper with a result about uniform Lipschitz-like
approximations of uniformly continuous functions. To this end, let us
recall two other classes of ``locally'' Lipschitz functions. For
metric spaces $(X,d)$ and $(Y,\rho)$, a map $f:X\to Y$ is called
\emph{uniformly locally Lipschitz} if there exists $\delta>0$ such
that $f\uhr \mathbf{O}(x,\delta)$ is Lipschitz, for each $x\in X$. A
map $f:X\to Y$ is called \emph{Lipschitz in the small} \cite{MR538094}
if there exists $\delta>0$ and $K>0$ such that each restriction
$f\uhr \mathbf{O}(x,\delta)$ is $K$-Lipschitz, for every $x\in
X$. There are standard examples showing that these Lipschitz-like maps
are different. For instance, $\frac1t:(0,+\infty)\to \R$ is locally
Lipschitz but not uniformly locally Lipschitz, while the function
$t^2:\R\to \R$ is uniformly locally Lipschitz but not Lipschitz in
the small. Also, one can easily construct Lipschitz in the small
functions which are not Lipschitz. For instance, such a function is
the one defined in Remark \ref{remark-Loc-Lipschitz-v16:3}. This is
not surprising because in the realm of bounded maps $f:X\to Y$,
Lipschitz is the same as Lipschitz in the small, see
\cite[2.15]{MR538094} and Proposition
\ref{proposition-Loc-Lipschitz-v3:2}.\medskip

The Lipschitz in the small maps are naturally related to
uniform approximations of uniformly continuous maps.

\begin{proposition}
  \label{proposition-Loc-Lipschitz-v21:2}
  Let $(X,d)$ and $(Y,\rho)$ be metric spaces and $\varphi:X\to Y$
  be a map which is a uniform limit of Lipschitz in the small
  maps. Then $\varphi$ is uniformly continuous.
\end{proposition}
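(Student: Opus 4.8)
The plan is to show that a uniform limit of Lipschitz in the small maps is uniformly continuous by a straightforward $\varepsilon/3$ argument. Fix $\varepsilon>0$ and, using the definition of uniform convergence, choose a Lipschitz in the small map $f:X\to Y$ with $\rho(\varphi(x),f(x))<\varepsilon/3$ for every $x\in X$. By definition of \emph{Lipschitz in the small}, there are $\delta_0>0$ and $K>0$ such that each restriction $f\uhr \mathbf{O}(x,\delta_0)$ is $K$-Lipschitz.

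Next I would set $\delta=\min\{\delta_0,\frac{\varepsilon}{3K+1}\}>0$ (the ``$+1$'' just guards against $K=0$) and check that this $\delta$ witnesses uniform continuity of $\varphi$. Indeed, suppose $p,q\in X$ with $d(p,q)<\delta$. Then $q\in \mathbf{O}(p,\delta_0)$, so both $p$ and $q$ lie in the ball $\mathbf{O}(p,\delta_0)$ on which $f$ is $K$-Lipschitz, giving $\rho(f(p),f(q))\leq K\,d(p,q)<K\delta\leq \varepsilon/3$. Combining this with the two approximation estimates and the triangle inequality,
\[
  \rho(\varphi(p),\varphi(q))\leq \rho(\varphi(p),f(p))+\rho(f(p),f(q))+\rho(f(q),\varphi(q))
  <\tfrac{\varepsilon}{3}+\tfrac{\varepsilon}{3}+\tfrac{\varepsilon}{3}=\varepsilon.
\]
Since $\varepsilon>0$ was arbitrary, $\varphi$ is uniformly continuous.

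There is no real obstacle here: the only point requiring a moment's care is that the \emph{same} constant $\delta_0$ and the \emph{same} Lipschitz constant $K$ work simultaneously for all the balls $\mathbf{O}(x,\delta_0)$ — this is exactly what the ``in the small'' hypothesis provides, and it is what makes $f$ (not merely locally Lipschitz but) uniformly controlled, so that the modulus of continuity of $f$, and hence of $\varphi$, can be chosen independently of the base point. Note that one uses here only that $f$ is Lipschitz in the small, not that the approximants $f_n$ form a sequence; a single sufficiently close approximant suffices.
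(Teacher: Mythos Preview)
Your proof is correct and follows essentially the same $\varepsilon/3$ argument as the paper: pick a Lipschitz-in-the-small approximant within $\varepsilon/3$, use its uniform constants $K$ and $\delta_0$ to control $\rho(f(p),f(q))$, and combine via the triangle inequality. The only cosmetic difference is your harmless ``$+1$'' safeguard (the paper's definition already assumes $K>0$), and that you write out the final triangle-inequality estimate explicitly where the paper leaves it implicit.
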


\begin{proof}
  Let $\varepsilon>0$ and $f:X\to Y$ be a map which is Lipschitz in
  the small such that $\rho(f(x),\varphi(x))<\frac\varepsilon3$ for
  every $x\in X$. By definition, there are $\delta>0$ and $K>0$ such
  that $f\uhr \mathbf{O}(x,\delta)$ is $K$-Lipschitz, for every
  $x\in X$.  Then taking
  $\delta_0=\min\left\{\frac{\varepsilon}{3K},\delta\right\}$, it
  follows that $\rho(\varphi(x),\varphi(z))<\varepsilon$ for every
  $x,z\in X$ with $d(x,z)<\delta_0$.
\end{proof}

In case $Y=\R$ is the real line, the converse is also true. The fact
that each uniformly continuous function $\varphi:X\to \R$ is a uniform
limit of Lipschitz in the small functions was observed by several
authors. For a compact metric space $X$, the result was obtained by
Georganopoulos \cite{MR214994}. For a bounded uniformly continuous
function $\varphi:X\to \R$, the result can be found in \cite[Theorem
6.8]{MR1800917}, see also \cite[Theorem 1]{MR1973966}. In fact, the
proof in \cite{MR1800917} is very simple and consists of showing that
the sequence of Lipschitz functions defined as in
\eqref{eq:Loc-Lipschitz-v23:1} for $\kappa=n\in\N$ is uniformly
convergent to $\varphi$. For an arbitrary uniformly continuous
function, the result was obtained by Garrido and Jaramillo
\cite[Theorem 1]{MR2376153}. Subsequently, a simple proof of this
theorem was given by Beer and Garrido in \cite[Theorem
6.1]{MR3334948}, which was a refined version of the argument in
\cite{MR1800917}. Below we simplify further this proof illustrating
its natural relationship with Theorem
\ref{theorem-Loc-Lipschitz-v9:1}.

\begin{theorem}
  \label{theorem-Loc-Lipschitz-v10:3}
  If $(X,d)$ is a metric space, then each uniformly continuous
  function $\varphi:X\to \R$ is a uniform limit of functions which are
  Lipschitz in the small.
\end{theorem}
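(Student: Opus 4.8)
The plan is to adapt the Pasch--Hausdorff construction from Theorem~\ref{theorem-Loc-Lipschitz-v9:1}, but instead of using a single Lipschitz constant $\kappa$, to use a constant that grows as the infimum is localised. Since $\varphi:X\to\R$ is uniformly continuous, given $\varepsilon>0$ there is $\delta>0$ with $|\varphi(x)-\varphi(y)|<\varepsilon$ whenever $d(x,y)<\delta$. The key point is that for the function $f_\kappa(p)=\inf_{x\in X}[\varphi(x)+\kappa d(x,p)]$ from \eqref{eq:Loc-Lipschitz-v23:1}, when $\kappa$ is large the infimum is effectively attained (up to a small error) at points $x$ close to $p$; more precisely, if $\varphi(x)+\kappa d(x,p)$ is within $\varepsilon$ of $f_\kappa(p)\le\varphi(p)$, then $\kappa\, d(x,p)\le \varphi(p)-\varphi(x)+\varepsilon\le 2M'+\varepsilon$ locally, but for the uniformly continuous case one gets a cleaner bound: if we only knew $\varphi$ were bounded this would be Theorem~6.8 of \cite{MR1800917}, so the real work is handling unbounded $\varphi$ by \emph{truncating the infimum to a ball of fixed radius}.

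First I would fix $\varepsilon>0$ and choose $\delta>0$ from uniform continuity so that $|\varphi(x)-\varphi(y)|<\varepsilon$ when $d(x,y)<\delta$. Then, mimicking Beer--Garrido, for each $p\in X$ define
\[
  f(p)=\inf_{x\in\mathbf{O}(p,\delta)}\Big[\varphi(x)+\tfrac{\varepsilon}{\delta}\,d(x,p)\Big],
\]
the infimum now being restricted to the open $\delta$-ball centred at $p$. Because $x=p$ is admissible, $f(p)\le\varphi(p)$; and since every admissible $x$ satisfies $\varphi(x)>\varphi(p)-\varepsilon$, we get $\varphi(p)-\varepsilon\le f(p)\le\varphi(p)$, so $\|f-\varphi\|_\infty\le\varepsilon$ and in particular $f$ is real-valued. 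This gives the uniform approximation; it remains to show $f$ is Lipschitz in the small.

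The Lipschitz-in-the-small estimate is where the main effort lies. Take $p,q\in X$ with $d(p,q)<\delta/3$, say, and compare $f(p)$ with $f(q)$. Given $\varepsilon'>0$, pick $x\in\mathbf{O}(q,\delta)$ with $\varphi(x)+\tfrac{\varepsilon}{\delta}d(x,q)<f(q)+\varepsilon'$. If $x$ also lies in $\mathbf{O}(p,\delta)$ (which holds when $d(x,q)<\delta-d(p,q)$, forced once $\tfrac{\varepsilon}{\delta}d(x,q)$ is small relative to the gap, using $f(q)\ge\varphi(q)-\varepsilon$ and $\varphi(q)-\varphi(x)<\varepsilon$ since $x$ is close to $q$), then $x$ is admissible for $f(p)$ and
\[
  f(p)\le \varphi(x)+\tfrac{\varepsilon}{\delta}d(x,p)\le f(q)+\varepsilon'+\tfrac{\varepsilon}{\delta}\big(d(x,q)+d(p,q)\big)-\tfrac{\varepsilon}{\delta}d(x,q)=f(q)+\varepsilon'+\tfrac{\varepsilon}{\delta}d(p,q).
\]
Letting $\varepsilon'\to0$ and using symmetry gives $|f(p)-f(q)|\le\tfrac{\varepsilon}{\delta}d(p,q)$ for all $p,q$ in a common ball of radius $\delta/3$; hence $f$ is Lipschitz in the small with constant $\varepsilon/\delta$ on balls of radius $\delta/3$. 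The one genuine obstacle is verifying that the near-optimal $x$ for $f(q)$ really does land in $\mathbf{O}(p,\delta)$: this requires choosing the radius of the common ball (here $\delta/3$) and the tolerance $\varepsilon'$ carefully so that the chain of inequalities $d(x,p)\le d(x,q)+d(p,q)<\delta$ closes; one checks that $d(x,q)\le\tfrac{\delta}{\varepsilon}(f(q)-\varphi(x)+\varepsilon')\le\tfrac{\delta}{\varepsilon}(\varphi(q)-\varphi(x)+\varepsilon')<\tfrac{\delta}{\varepsilon}(\varepsilon+\varepsilon')$, which is $<\delta$ once $\varepsilon'<\varepsilon$ is small — but this is not quite enough margin, so in practice I would restrict the infimum to $\mathbf{O}(p,\delta/2)$ and work with $d(p,q)<\delta/4$, giving the needed slack. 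Since $\varepsilon>0$ was arbitrary, letting $\varepsilon=1/n$ produces a sequence of Lipschitz-in-the-small functions converging uniformly to $\varphi$, completing the proof.
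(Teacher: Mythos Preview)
Your overall strategy---localise the Pasch--Hausdorff infimum to a ball of fixed radius---is exactly the paper's (following Beer--Garrido), and your approximation estimate $\varphi-\varepsilon\le f\le\varphi$ is fine. The gap is in the Lipschitz-in-the-small step, and the fix you propose does not close it. With the constant $K=\varepsilon/\delta$, a near-optimal $x$ for $f(q)$ satisfies only $K\,d(x,q)<\varepsilon+\varepsilon'$, i.e.\ $d(x,q)<\delta(1+\varepsilon'/\varepsilon)$; this is no improvement over the trivial bound coming from the domain of the infimum, so there is no way to force $d(x,p)$ below the radius you need. Shrinking the infimum to $\mathbf{O}(p,\delta/2)$ and taking $d(p,q)<\delta/4$ still leaves $d(x,p)$ as large as $3\delta/4$, not $<\delta/2$. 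The obstruction is the size of the Lipschitz constant, not the radius of the ball: $K=\varepsilon/\delta$ is exactly the borderline value at which the argument collapses.

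The paper's remedy is to take the constant \emph{strictly} larger. Choose $\delta$ so that $|\varphi(x)-\varphi(y)|<\varepsilon$ whenever $d(x,y)<2\delta$, and pick $k\in\N$ with $k\delta>\varepsilon$; then set $f(x)=\inf_{y\in\mathbf{O}(x,2\delta)}[\varphi(y)+k\,d(y,x)]$. For any $y$ with $\delta\le d(y,x)<2\delta$ one has $\varphi(y)+k\,d(y,x)>\varphi(x)-\varepsilon+k\delta>\varphi(x)\ge f(x)$, so such $y$ never affect the infimum and $f(x)$ also equals $\inf_{y\in\mathbf{O}(x,\delta)}[\varphi(y)+k\,d(y,x)]$. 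This \emph{dual representation} makes the Lipschitz estimate immediate and avoids chasing near-optimal points altogether: if $d(p,q)<\delta$ then every $y\in\mathbf{O}(q,\delta)$ lies in $\mathbf{O}(p,2\delta)$, hence $f(p)\le\varphi(y)+k\,d(y,p)\le\varphi(y)+k\,d(y,q)+k\,d(p,q)$, and taking the infimum over $y\in\mathbf{O}(q,\delta)$ gives $f(p)\le f(q)+k\,d(p,q)$.
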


\begin{proof}
  Suppose that $\varphi:X\to \R$ is uniformly continuous and
  $\varepsilon>0$. Also, let $\delta>0$ be such that
  $|\varphi(x)-\varphi(y)|<\varepsilon$, for every $x,y\in X$ with
  $d(x,y)<2\delta$. Next, following \cite{MR3334948}, take $k\in\N$
  such that $k\delta > \varepsilon$, and define $f : X\to \R$ by
  \begin{equation}
    \label{eq:Loc-Lipschitz-v10:3}
    f(x)=\inf_{y\in \mathbf{O}(x,2\delta)}[\varphi(y)+ k d(y,x)],\quad
    x\in X.
  \end{equation}
  Then $\varphi(x)\geq f(x)\geq \varphi(x)-\varepsilon$ because
  $\varphi(y)>\varphi(x)-\varepsilon$ for every
  $y\in \mathbf{O}(x,2\delta)$, so $|f(x)-\varphi(x)|< \varepsilon$.
  Whenever $y\in \mathbf{O}(x,2\delta)$ with $d(y,x)\geq
  \delta$, since $k\delta>\varepsilon$, it follows that
  $ f(x)\leq \varphi(x)<\varphi(y)+\varepsilon<\varphi(y)+k\delta\leq
  \varphi(y)+k d(y,x)$.  Therefore, we actually have that
  \begin{equation}
    \label{eq:Loc-Lipschitz-v10:4}
    f(x)=\inf_{y\in
      \mathbf{O}\left(x,\delta\right)}\left[\varphi(y)+k
      d(y,x)\right],\quad x\in X. 
  \end{equation}
  This implies that $f$ is $k$-Lipschitz on each open
  $\frac\delta2$-ball. Indeed, take $p,q\in X$ with
  $d(p,q)<\delta$. Then
  $\mathbf{O}(p,\delta)\cup \mathbf{O}(q,\delta)\subset
  \mathbf{O}(p,2\delta)\cap \mathbf{O}(q,2\delta)=X_{pq}$. Since
  $k d(x,p)\leq k d(x,q)+ k d(q,p)$, $x\in X_{pq}$, as in the proof of
  Theorem \ref{theorem-Loc-Lipschitz-v9:1} but now using
  \eqref{eq:Loc-Lipschitz-v10:3} and \eqref{eq:Loc-Lipschitz-v10:4},
  we get that $f(p)\leq f(q)+k d(p,q)$. Evidently, this is
  equivalent to $|f(p)-f(q)|\leq k d(p,q)$.
\end{proof}

\providecommand{\bysame}{\leavevmode\hbox to3em{\hrulefill}\thinspace}


\end{document}